\let\pa\partial   
\let\eps\varepsilon
\newcommand{\R}{{\mathbb R}}  
\newcommand{\diver}{\operatorname{div}}
\newcommand{\Rey}{\operatorname{Re}}
\newcommand{\D}{\operatorname{D}}
\newcommand{\T}{\mathsf{T}}
\newcommand{\dd}{{\mathrm{d}}}
\newcommand{\vphi}{\boldsymbol\varphi}
\newcommand{\uvro}{\upvarrho}
\newcommand{\bv}{\boldsymbol v}
\newcommand{\bu}{\boldsymbol u}
\newcommand{\bs}{\boldsymbol}
\newcommand{\bx}{\boldsymbol x}
\newcommand{\bn}{\mathbf n}
\newcommand{\bef}{\boldsymbol f}
\newcommand{\vol}{\operatorname{vol}}
\newcommand{\va}{\mathsf v}
\newcommand{\bva}{\boldsymbol{\mathsf v}}
\newcommand{\paa}{\mathsf p}
\theoremstyle{plain}
\newtheorem{theorem}{Theorem}[section]   
\newtheorem{lemma}[theorem]{Lemma}   
\newtheorem{proposition}[theorem]{Proposition}  
\newtheorem{corollary}[theorem]{Corollary}  
\theoremstyle{definition}
\newtheorem{definition}{Definition}[section]
\theoremstyle{remark}
\newtheorem{remark}{Remark}[section]
\begin{document}

\title[Nonlinear thin FSI problem]{Justification of a nonlinear sixth-order thin-film 
equation as the reduced model for a fluid - structure interaction problem}
 
\author{Mario Bukal$^1$}
\address[1]{University of Zagreb,
Faculty of Electrical Engineering and Computing\newline
Unska 3, 10000 Zagreb, Croatia}
\email{mario.bukal@fer.hr}
\author{Boris Muha$^2$}
\address[2]{University of Zagreb,
Faculty of Science, Department of Mathematics,
Bijeni\v cka cesta 30, 10000 Zagreb, Croatia}
\email{borism@math.hr}

\thanks{This work has been supported by the Croatian Science
Foundation under projects 7249 (MANDphy) and IP-2018-01-3706 (FSIApp).}

\keywords{thin viscous fluids, nonlinear fluid-structure interaction, 
nonlinear sixth-order thin-film equation}

\subjclass[2010]{35M30, 35Q30, 35Q74, 76D05, 76D08}

 \begin{abstract}
Starting from a nonlinear 2D/1D fluid-structure interaction problem between a thin layer
of a viscous fluid and a thin elastic structure, on the vanishing limit of the relative fluid thickness, we rigorously derive a sixth-order thin-film equation describing the dynamics of vertical displacements of the structure. The procedure is essentially based on quantitative energy estimates, quantified in terms of the relative fluid thickness, and a uniform no-contact result between the structure and the solid substrate. The sixth-order thin-film equation is justified in the sense of strong convergence of rescaled structure displacements to the unique positive classical solution of the thin-film equation. Moreover, the limit fluid velocity and the pressure can be expressed solely in terms of the solution to the thin-film equation.
\end{abstract} 

\date{\today}
\maketitle

\section{Introdcution}

Motivated by applications in microfluidics \cite{HoMa04, LBS05, OYN13, TaVe12}  
and so called lab-on-a-chip technologies \cite{DaFin06, SSA04}, 
which revolutionized experimentations in biochemistry and biomedicine, in this paper
we rigorously justify a nonlinear sixth-order thin-film equation as the reduced model
of a fluid-structure interaction (FSI) system in the lubrication approximation regime, 
i.e.~in the regime of the vanishing relative 
fluid thichness. The sixth-order thin-film 
equation, which is the subject of this paper, reads
\begin{equation}\label{eq:intro6TF}
\pa_th = \pa_x\left(h^3\left(\pa_x^5h - \Phi\right) \right),
\end{equation}
where $h(x,t) > 0$ denotes the rescaled fluid height and $\Phi$ is an external potential
resulting from the bulk fluid force.

Physical systems in which fluids lubricate underneath elastic 
structures are common also in other area of science and technology.
To name few like the growth of magma intrusions \cite{LPN13, Mic11}, 
subglacial floods \cite{DJBH08}, the passage of air flow in the lungs \cite{HHS08} 
and the operation of vocal cords \cite{Tit94}, or manufacturing of silicon 
wafers \cite{HuSo02, King89} and 
suppression of viscous fingering \cite{PIHJ12, PJH14}.
Contrary to the well-established FSI approach \cite{BGN14,BCMG16,Dow15},
favorable models for the above listed examples (and others) in engineering literature 
like \cite{HBB13,HoMa04,HuSo02, King89,LPN13,TaVe12} are sixth-order
evolution equations similar to (\ref{eq:intro6TF}). 
While an FSI problem is a coupled systems of partial differential 
equations on moving boundary domain, where the fluid is typically described by 
the Stokes or Navier-Stokes equations, and the structure is described by appropriate 
elasticity equations, the sixth-order thin-film equation is a 
single nonlinear evolution equation, 
which is typically easier to analyze and treat numerically.
In that sense it can be seen as a reduced (simplified) model of a given (multi-)physical 
system.

Formally, the thin-film equation results from an FSI problem by performing the 
so called lubrication approximation procedure \cite{Sze12}, 
giving rise to the Reynolds equation for the pressure (see e.g.~\cite{BayCha86,NazPil90}), 
and balancing the fluid pressure distribution with structure force densities per unit area. 
It is our aim here to perform this passage from an FSI problem to the thin-film equation 
rigorously in the sense of convergence of solutions of the FSI problem to the solution of the
corresponding thin-film equation, as stated in our main result in Theorem \ref{tm:main}.
Such procedure has been already performed by the authors 
in passing from fully linear 3D/2D \cite{BuMu21R} and 3D/3D \cite{BuMu21P} 
FSI problems to 2D linear sixth-order evolution equations. Now we extend these ideas from
linear to the nonlinear setting, but only in case of 2D/1D FSI problems due to 
availability of the global well-posedness result (cf.~Theorem \ref{ExistenceGlobal}).
However, this is the first rigorous justification of the nonlinear sixth-order thin-film 
equation (\ref{eq:intro6TF}) in the literature.

There are other similar models, which can be called reduced in this context, that 
have been subject of rigorous derivation, we outline them briefly in the sequel. 
Starting from various FSI problems, authors in 
\cite{CanMik03,MikGuCan07} studied the flow through a 
long elastic axially symmetric channel and using asymptotic expansion techniques obtained several 
reduced models of Biot-type. In \cite{CanMik03} they provided a rigorous 
justification of the reduced model through a weak convergence result and the corresponding 
error estimates. In \cite{PaSt06} Panasenko and Stavre analyzed a periodic flow in thin 
channel with visco-elastic walls. The problem was initially described by a 
linear 2D/1D FSI model, and under a special ratio of the channel 
height and the rigidity of the wall a linear sixth-order evolution equation 
emanated as the reduced model. 
A similar problem has been also considered in \cite{CuMP18}, 
resulting again in the reduced model described by another linear sixth-order equation. 
In both papers, reduced models have been rigorously justified by the appropriate convergence 
results. 
The starting point for 2D/1D FSI problem in \cite{PaSt06} has been justified in \cite{PaSt14} 
as a reduced model of a 2D/2D FSI problem. Finally, we mention \cite{PaSt20} where an interaction 
between a thin cylindrical elastic tube and a viscous fluid filling its thin interior 
has been considered. Using asymptotic analysis techniques, ten different cases have been identified
and related to the dependence of the Young modulus and density of the elastic medium with respect to small
geometric parameters, and corresponding reduced models have been derived. 
We emphasize that in all the above listed literature samples, FSI problems are linear.

On the other hand, widely known and well-studied models in mathematical literature 
are fourth-order thin-film equations
\begin{equation}\label{eq:intro4TF}
\pa_th = \pa_x\left(h^n\pa_x^3h\right),
\end{equation}
possibly in higher dimensions and with some lower-order terms that we for simplicity omitt \cite{ODB97}.
They can be seen as reduced models of free-bounary viscous fluid flows with dominant surface
tension effects \cite{Mye98}. Parameter $n>0$ reflects different physical settings, 
for $n=3$ equation (\ref{eq:intro4TF}) describes the dynamics of a liquid droplet on a solid 
substrate with no-slip condition, while $n=2$ corresponds to the Navier slip \cite{ODB97}. 
If $n=1$, then
(\ref{eq:intro4TF}) describes the dynamics of a thin fluid neck in the Hele-Shaw cell \cite{CDGKSZ93}. 
Some of these equations have been also subject of rigorous derivation.  
The lubrication approximation in the Hele-Shaw cell ($n=1$) has been rigorously 
justified in \cite{GiOt03,KnMa13,KnMa15,MaPr12}, while the sibling of our sixth-order equation,
equation (\ref{eq:intro4TF}) with $n=3$, has been justified in \cite{GuPr08}.
We emphasize at this point that the derivation in \cite{GuPr08} has been performed
under the assumption that the fluid droplet will not break up, while here we use
the no-contact result from \cite{grandmont2016existence} and prove its 
uniform strict positivity (cf.~Proposition \ref{prop:no_contact}), 
which is one of the key ingredients in
the justification of equation (\ref{eq:intro6TF}).

Let us now briefly describe our framework of the rigorous derivation of equation (\ref{eq:intro6TF}).
We start with an FSI problem for which we assume certain scalling assumptions for its non-dimensional 
coefficients. Then we derive a quantitative energy estimate, which is the source of weak convergence results.
Finally, based on the uniform no-contact result, we identify the sixth-order thin-film equation 
satisfied by the limit of rescaled structure displacements.

\subsection{FSI problem}  
The fluid domain at time $t$ is assumed to be a subgraph of
a space-time dependent function $\eta = \eta(x,t)$ describing the dynamics of 
the vertical structure displacement, i.e.~the fluid domain is of the form 
\begin{equation*}
\Omega_{\eta}(t)=\{\bx = (x,z):x\in \omega,\; z\in (0,\eta(x,t))\}\subset\R^2\,
\end{equation*}
with $\omega=(0,1)$. In applications $\Omega_\eta(t)$ can be seen as a periodic channel  
with deformable top wall (see Figure \ref{fig:domain_eta}). 
Let us further denote the space-time cylinder
$$
\Omega_\eta(t)\times (0,T):=\bigcup_{t\in 
(0,T)}\Omega_{\eta}(t)\times\{t\}\subset\R^2\times (0,\infty),\quad T\in (0,\infty]\,,
$$
to be domain of our free boundary problem.
The non-dimensionalized nonlinear FSI problem is described by the coupled system of partial differential equations:    
\begin{align}
\pa_t\bv+(\bv\cdot \nabla)\bv - \diver\sigma_f(\bv,p) &= \bs f\,,\quad 
\Omega_\eta(t)\times(0,\infty)\,,\label{1.eq:stokes}\\
\diver \bv &= 0\,,\quad \Omega_\eta(t)\times(0,\infty)\,,\label{1.eq:divfree}\\
\uvro\pa_{tt}\eta - \updelta \partial^2_{x}\pa_t\eta + \upbeta\partial^4_{x}\eta  
&=-J^{\eta}(x,t) \big(\sigma_f(\bv,p){\bf n}^{\eta}\big )(x,\eta(x,t),t)\cdot{\bf e}_z\,, 
\quad \omega\times(0,\infty)\,,\label{1.eq:elast}
\\
\bv(x,\eta(x,t),t)&=(0,\partial_t\eta(x,t))\,,\quad \omega\times(0,\infty)\,.\label{KinematicBC}
\end{align}
Equations \eqref{1.eq:stokes} and \eqref{1.eq:divfree} are incompressible 
Navier-Stokes equations describing the flow of a viscous Newtonian fluid of velocity $\bv$ and
pressure $p$. The Cauchy stress tensor is given by $\sigma_f(\bv,p) = 2 \D(\bv) - pI_2$, where
$\D(\bv) = \frac{1}{2}(\nabla\bv + (\nabla\bv)^T)$ denotes the symmetric part of the gradient, 
and $\bs f$ denotes the density of the fluid external force.
The structure is described by a linear equation of visco-elastic plate (\ref{1.eq:elast}), where 
$J^{\eta}(x,t)=\sqrt{1+\partial_x\eta(x,t)^2}$ is the Jacobian of the transformation 
from Eulerian to Lagrangian coordinates, $\bf n^{\eta}$ is the unit outer normal to the deformed configuration 
$\Omega_{\eta}$, ${\bf e}_z$ is the unit vector in $z$ direction, and 
$\uvro$, $\updelta$, $\upbeta$ are positive parameters describing the density, 
visco-elasticity and bending properties of the structure, respectively.
Equations for the fluid and the structure are coupled via dynamic and kinematic 
conditions \eqref{1.eq:elast} and \eqref{KinematicBC} representing the balance of forces 
in ${\bf e}_z$ direction and continuity of the velocity, respectively.
For dimensional version of the problem \eqref{1.eq:stokes}--\eqref{KinematicBC} see
Appendix \ref{app:SA}.

\begin{figure}
\includegraphics[width = 9cm]{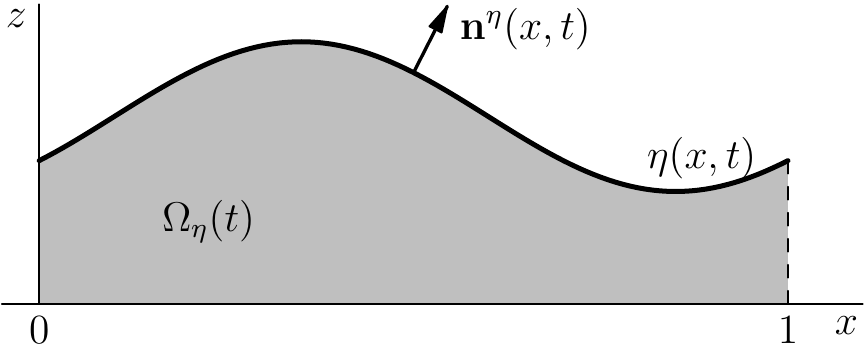}  
\caption{Sketch of the time-dependent domain $\Omega_{\eta}(t)$.}
\label{fig:domain_eta}     
\end{figure}

Assumption that the structure moves only in the vertical direction is a simplification which
is not fully justified from the physical grounds. However, it is a reasonable assumption from
the point of view of our previous results on the dimension reduction in thin FSI problems \cite{BuMu21P},
where it has been shown that the vertical displacement dominates horizontal displacements by the order of magnitude.
For more details about the physical background of system \eqref{1.eq:stokes}-\eqref{KinematicBC} and 
corresponding lower-dimensional elasticity models we refer to \cite{Cia97,BorSun} and 
the references therein. 
The bottom boundary is a rigid substrate and we prescribe the standard no-slip boundary condition 
for the fluid velocity:
$\bv(x,0,t)=0$ for all $(x,t)\in\omega\times(0,\infty)$.
On the lateral boundaries of $\Omega_\eta(t)$ we prescribe periodic boundary conditions 
in the horizontal direction,
which is taken for technical simplicity and because of availability of the 
global existence results \cite{grandmont2016existence}. 
In such a case the flow is driven by the right-hand side $\bs f$.
Finally, for simplicity of exposition, we impose trivial initial conditions for velocities
$\bv(\cdot,0)=0$ and $\partial_t\eta(\cdot,0)=0$, while for the displacement we take 
$\eta(\cdot,0)=\eta_0$ and assume $\eta_0(x) > 0$ for all $x\in\omega$.
The incompressibility condition (\ref{1.eq:divfree}) and the kinematic coupling (\ref{KinematicBC}) 
provide the conservation of the fluid volume:
\begin{align}
\frac{\dd}{\dd t}{\rm vol}(\Omega_{\eta}(t)) &= \frac{\dd}{\dd t}\int_{\omega}\eta(x,t)\dd x\nonumber
= \int_{\omega}\partial_t\eta(x,t)\dd x = \int_{\omega}\bv(x,\eta(x,t),t)\cdot{\bf e}_z\dd x\\
&=\int_{\omega} (\bv\cdot \bn^\eta)(x,\eta(x,t),t)\bn^\eta(x,\eta(x,t),t)\cdot{\bf e}_z\dd x \label{VolumeCon}
= \int_{\pa\Omega_{\eta}(t)}\bv\cdot\bn^\eta\, \dd S\\
&= \int_{\Omega_\eta(t)}\diver\bv\, \dd \bx = 0\,.\nonumber
\end{align}
Therefore, despite the periodic boundary conditions, the displacement $\eta$ is uniquely determined.

\subsubsection*{Scaling assumptions}
Guided by our previous results in the linear case \cite{BuMu21P}, we assume the following scaling 
ansatz which relates the nondimensional structure parameters and the time scale $\T$ to 
the order of the relative fluid thickness $\eps$:
\begin{enumerate}
  \item[(S1)] $\upbeta = \hat{\upbeta}\eps^{-1}$, $\updelta=\hat{\updelta}\eps^{-r}$, for some $r\in [1,3]$, 
  	and $\uvro = \hat{\uvro}\eps$, where $\hat{\upbeta},\;\hat{\updelta},\;\hat{\uvro}>0$ are independent of $\eps$;
  \item[(S2)] $\T = \eps^{-2}$.
\end{enumerate}
Scaling assumptions (S1) are motivated by applications in microfluidics 
and discussed in detail in Appendix \ref{app:SA}, while (S2)
is the standard time scale in the lubrication approximation regime.
In this context we also assume that the volume of the initial domain $\Omega_\eta(0)$ is of 
size $O(\eps)$. More precisely, 
\begin{enumerate}
  \item[(S3)] $\eta^{\eps}_0(x)=\eps\eta_0(x)$ for some $\eta_0(x)$ independent of $\eps$. 
\end{enumerate}
Finally, we assume that the fluid volume force satisfies 
\begin{enumerate}
  \item[(S4)] $\|\bs f\|_{L^\infty(0,\infty;L^{\infty}(\Omega_\eta(t);\R^2))} \leq C$,
\end{enumerate}
where $C>0$ is independent of $\eps$.
Assumption (S4) is verified by many physically relevant forces. 

\subsection{Uniform estimates}
Let $\eta_0\in H_\#^3(\omega)$ be given strictly positive $\omega$-periodic function, then according to \cite{grandmont2016existence}
(cf.~Theorem \ref{ExistenceGlobal} below) for every $\eps > 0$ there exists a unique 
global-in-time strong solution $(\bv^\eps,p^\eps,\eta^\eps)$ to the system 
(\ref{1.eq:stokes})-(\ref{KinematicBC}) with initial displacement $\eta^{\eps}_0=\eps\eta_0$.
We use this stronger solution concept (cf.~Definition \ref{def:strongsol}) because of availability of the global well-posedness result and moreover, the strict positivity of $\eta^\eps$ meaning that the contact between the elastic structure and the rigid substrate will not occur in finite time. On the other hand, global well-posedness of weak solutions (see Definition \ref{def:weaksol}) to (\ref{1.eq:stokes})-(\ref{KinematicBC}) is an open problem. Namely, one can prove their existence only
up to the contact between the elastic structure and the rigid substrate \cite{BorSun}, which is unresolved issue in the context of weak solutions.

Under scalling assumptions (S1)-(S4), strong solutions $\bv^\eps$ and $\eta^\eps$ satisfy the following energy estimate (cf.~Proposition \ref{prop:energyineq} below): for a.e.~$t\in(0, T)$ it holds
\begin{align*}
\frac{1}{2}\|\bv^\eps(t)\|^2_{L^2(\Omega_{\eta}(t))}\nonumber
&+\frac{\eps^{-2}}{2}\int_0^t\!\!\int_{\Omega_{\eta}(s)}|\nabla\bv^\eps|^2\dd \bx\dd s\\
+\frac{\uvro\eps^{5}}{2}\|\partial_t\eta^\eps(t)\|^2_{L^2(\omega)}
&+\updelta\eps^{2-r}\int_0^t\|\partial_t\partial_{x}\eta^\eps(s)\|^2_{L^2(\omega)}\dd s
+\frac{\upbeta \eps^{-1}}{2}\|\partial^2_x\eta^\eps(t)\|^2_{L^2(\omega)}
\leq C\eps\,,
\end{align*}
where $C>0$ is independent of $\eps$, and $T>0$ is a rescaled time horizon. This inequality is the key source of a priori estimates on strong solutions and consequently weak convergence results.
Another indispensable ingredient is the uniform no-contact result (see Proposition \ref{prop:no_contact}), i.e.~there exists a constant $c>0$, independent of $\eps$, such that 
\begin{equation*}
\frac{\eta^\eps(x,t)}{\eps} \geq c\quad\text{for a.e.~}(x,t)\in \omega\times(0,T)\,.
\end{equation*}   

\subsection{Reduced model}
Having these at hand we can prove
\begin{equation*}
\frac{\eta^{\eps}}{\eps}\to h\quad \text{strongly in }C([0,T];C^1_\#(\overline\omega))\ \text{ as }\eps\downarrow0\,,
\end{equation*}
and function $h$ can be identified as the unique positive classical solution of the following nonlinear sixth-order thin-film equation
\begin{align}\label{1.eq:6tf}
\pa_t h = \pa_x\left(h^3\left(\frac{\upbeta}{12}\partial^5_{x}h 
- \chi_{\{r=3\}}\frac{\updelta}{12}\partial^3_{x}\partial_{t}h - \Phi \right) \right)\,\,\quad \text{on }\omega\times(0,T)\,
\end{align}
with initial datum $\eta_0$.
Here $\Phi(x,t)$ is an external potential related to the fluid volume force $\bs f$ as given by (\ref{limitvelint}), and
$\chi_{\{r=3\}} = 1$ if $r=3$ and $0$, otherwise.  
In this sense, equation (\ref{1.eq:6tf}) can be understood as the reduced model for the FSI problem
(\ref{1.eq:stokes})-(\ref{KinematicBC}) in the limit
as $\eps\downarrow0$. 
\begin{remark}
Note that equation (\ref{eq:intro6TF}) is just a special case of equation \eqref{1.eq:6tf} for 
$\upbeta = 12$ and $\chi_{\{r=3\}} = 0$, i.e.~$r<3$ in (S1).
\end{remark}

We summarize our main findings in
\begin{theorem}\label{tm:main}
Let $(\bv^{\eps},p^\eps, \eta^{\eps})$ be a family of strong solutions to 
problem \eqref{1.eq:stokes}-\eqref{KinematicBC} in the sense of Definition \ref{def:strongsol} and assume that coefficients and data satisfy (S1)-(S4). 
Then on the limit as $\eps\downarrow0$ 
\begin{align}
\eps^{-1}\eta^{\eps}\to h\quad \text{strongly in}\; C([0,T];C^1_\#(\overline \omega))\,,
\end{align}
where $h$ is the unique positive classical solution of equation \eqref{1.eq:6tf}.
Moreover, rescaled strong solutions $\hat\bv^\eps$ and $\hat p^\eps$ defined on the reference domain $\Omega\times (0,T)$, where $\Omega = \omega\times(0,1)$, satisfy:
\begin{align}
\eps^{-2}\hat\bv^{\eps}\rightharpoonup (v_1 , 0)\; \text{weakly in}\; L^2(0,T;L^2(\Omega))\,,\\
\hat p^{\eps} \rightharpoonup p \quad \text {weakly in } H^{-1}(0,T;L^2(\Omega))\,,
\end{align}
where $p$ and $v_1$ are given by
\begin{align*}
p &= \upbeta\partial^4_{x}h - \chi_{\{r=3\}}\updelta\partial_{t}\partial^2_{x}h\,,\\
v_1(\cdot,y,\cdot)&=\frac{1}{2}y(y - 1)h^2\partial_x p + h^2F(\cdot,y,\cdot)\,,\quad y\in [0,1]\,,
\end{align*}
and $F$ is given as in \eqref{LimitVelFormula}.
\end{theorem}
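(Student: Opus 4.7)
The plan is to proceed in four stages: (i) extract uniform bounds from the energy inequality and the no-contact result; (ii) rescale the system to the fixed reference cylinder $\Omega\times(0,T)$ and extract weak limits; (iii) identify the limit velocity, pressure and evolution equation for $h$; and (iv) upgrade the displacement convergence to the claimed strong topology and invoke uniqueness of positive classical solutions of \eqref{1.eq:6tf}. First I would rescale vertically via $y=z/\eta^\eps(x,t)$ and set $h^\eps:=\eps^{-1}\eta^\eps$. The energy estimate combined with (S1)--(S4) yields uniform bounds on $h^\eps$ in $L^\infty(0,T;H^2_\#(\omega))$, on $\eps^{-2}\hat\bv^\eps$ in $L^2(0,T;H^1(\Omega))$ (with the vertical component of strictly lower order because of the anisotropic scaling), and---via a Bogovskii-type reconstruction on the reference domain---on $\hat p^\eps$ in $H^{-1}(0,T;L^2(\Omega))$. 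The no-contact Proposition~\ref{prop:no_contact} guarantees $h^\eps\geq c>0$ uniformly, keeping all $h^\eps$-dependent coefficients produced by the rescaling uniformly non-degenerate.

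Next I would extract weakly convergent subsequences $h^\eps\rightharpoonup h$, $\eps^{-2}\hat\bv^\eps\rightharpoonup \bV$, $\hat p^\eps\rightharpoonup p$. The rescaled kinematic condition forces the vertical component of $\bV$ to vanish, giving $\bV=(v_1,0)$ with $v_1|_{y=0,1}=0$. Passing to the limit in the rescaled horizontal momentum equation---in which the convective term is of strictly lower order at scale $\eps^{-2}$ under the available bounds, while the vertical component separately forces $\partial_y p=0$---yields a linear second-order ODE in $y$ for $v_1$ which, after integration against the vanishing boundary data, produces exactly the Poiseuille-type profile $v_1=\frac{1}{2}y(y-1)h^2\partial_x p + h^2 F$ announced in the theorem. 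Passing to the limit in the structure equation \eqref{1.eq:elast}, the inertial term (of order $\eps^5$) and the nonlinear geometric factors drop out, while the visco-elastic term survives precisely when $r=3$, leaving $p=\upbeta\partial_x^4 h - \chi_{\{r=3\}}\updelta\partial_t\partial_x^2 h$.

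Inserting the Poiseuille profile into the limit of the volume-conservation identity \eqref{VolumeCon} (which, after rescaling, reduces to $\partial_t h = \partial_x\int_0^1 v_1\,\dd y$) and substituting the pressure formula produces \eqref{1.eq:6tf}. To upgrade $h^\eps\to h$ to strong convergence in $C([0,T];C^1_\#(\overline\omega))$, I would apply Aubin--Lions using the $H^2$ spatial bound and an $L^2(0,T)$ bound on $\partial_t h^\eps$ extracted from the rescaled kinematic condition, and then exploit the one-dimensional embedding $H^2_\#(\omega)\hookrightarrow C^1_\#(\overline\omega)$. This strong convergence is indispensable for passing to the limit in the cubic coefficient $h^3$ appearing in \eqref{1.eq:6tf}. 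Uniqueness of positive classical solutions with initial datum $\eta_0$ then upgrades convergence from subsequences to the whole family.

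The hardest step, I expect, is the simultaneous identification of the limit pressure: $p$ must be recovered both as the weak limit of $\hat p^\eps$ in $H^{-1}(L^2)$ and as the distribution $\upbeta\partial_x^4 h - \chi_{\{r=3\}}\updelta\partial_t\partial_x^2 h$ imposed by the structure equation. Making sense of the latter demands spatial regularity of $h$ strictly beyond $H^2$, which does not follow from the $\eps$-level a priori bounds alone and must instead be extracted from the limit thin-film equation itself, exploiting its parabolic smoothing in the regime of strictly positive classical solutions. A secondary delicate point is the closure of the fourth-order term $\partial_x^4 h$ weak-limit in the structure equation, which requires reconciling the a priori $H^2$ control with the $H^{-1}(L^2)$ pressure limit through the duality pairing with sufficiently smooth test functions and a careful handling of the rescaling Jacobians $J^{\eta^\eps}$.
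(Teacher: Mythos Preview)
Your outline matches the paper's proof (Sections~3--4) closely: uniform energy bounds plus the no-contact result give the $L^\infty(H^2_\#)$ control on $h^\eps$ and the anisotropic velocity bounds; an $L^2(0,T;H^{-1}_\#)$ estimate on $\partial_t h^\eps$ (derived, as you say, from the rescaled divergence-free and kinematic conditions) feeds Aubin--Lions to produce the strong $C([0,T];C^1_\#)$ convergence; and passing to the limit in the rescaled weak formulation with successively chosen test functions yields $\partial_y p=0$, the pressure--structure relation, the Poiseuille profile for $v_1$, and finally the Reynolds/thin-film equation, with the bootstrap to classical regularity coming from the limit equation exactly as you anticipate. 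Two small slips to watch: the full $L^2(0,T;H^1(\Omega))$ bound on $\eps^{-2}\hat\bv^\eps$ is \emph{not} available---only $\partial_y$ is controlled at that scale, the horizontal derivative losing a power of $\eps$---and the depth-averaged mass balance reads $\partial_t h + \partial_x\bigl(h\int_0^1 v_1\,\dd y\bigr)=0$, the extra factor of $h$ being what generates the $h^3$ mobility in \eqref{1.eq:6tf}.
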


In Section \ref{sec:EE} we derive the basic energy estimate for classical solutions of FSI problem 
\eqref{1.eq:stokes}-\eqref{KinematicBC} and discuss different solution concepts. Quantitative uniform estimates with respect to small parameter $\eps$ have been conducted in Section \ref{sec:UE}, while the identification of the reduced model model, i.e.~the proof of Theorem \ref{tm:main} has been performed in Section \ref{sec:RM}. We conclude this paper with brief Section \ref{sec:CP} on future perspectives and two appendices discussing the physical background of FSI problem \eqref{1.eq:stokes}-\eqref{KinematicBC} and proving a technical lemma, respectively.

\section{Energy estimates and global solutions}\label{sec:EE}
In this section we provide quantitative bounds on the energy and the
energy dissipation of the system \eqref{1.eq:stokes}-\eqref{KinematicBC},
which depend explicitly on the small parameter $\eps$. 

\subsection{Auxiliary inequalities}
First we provide basic functional inequalities tailored to our moving boundary domains
$\Omega_\eta(t)\subset\R^2$ and solutions of the system \eqref{1.eq:stokes}-\eqref{KinematicBC}.
\begin{proposition}\label{Poincare}
Let $T>0$, $\eta\in W^{1,\infty}(0,T;X)$, where $X$ is a Banach space and let
$\bv\in\left\{\bv\in H^1(\Omega_{\eta}(t))\ :\ \diver\bv=0,\;
 \bv|_{z=0}=0,\ \bv \text{ is } \omega\text{-periodic in }x\right\}$ 
for $t\in(0,T)$. The following (in)equalities hold for a.e.~$t\in(0,T)$:
\begin{align}
& \|\bv\|_{L^2(\Omega_\eta(t))} \leq 
\frac{1}{\sqrt{2}}\|\eta(t)\|_{L^{\infty}(\omega)}\|\partial_z\bv\|_{L^2(\Omega_{\eta}(t))}\,, 
\quad \text{(Poincar\'e inequality)}\,,
\label{2.ineq:eps_P}\\
&  \sqrt{2}\|\D(\bv)\|_{L^2(\Omega_\eta(t))}
= \|\nabla\bv\|_{L^2(\Omega_\eta(t))} \,,\quad \text{(Korn equality)}\,.\label{2.ineq:eps_K} 
\end{align}
\end{proposition}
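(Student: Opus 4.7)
The plan is to prove the two assertions independently. For the Poincar\'e inequality, the key ingredient is the vanishing Dirichlet trace on the bottom. Since $\bv|_{z=0}=0$ and $\bv\in H^1(\Omega_\eta(t))$, the fundamental theorem of calculus on each horizontal slice gives $\bv(x,z)=\int_0^z\partial_z\bv(x,s)\,\dd s$ for a.e.~$x\in\omega$, and Cauchy--Schwarz then yields $|\bv(x,z)|^2\leq z\int_0^{\eta(x,t)}|\partial_z\bv(x,s)|^2\,\dd s$. Integrating first in $z$ over $(0,\eta(x,t))$ produces the factor $\eta(x,t)^2/2$; bounding this by $\|\eta(t)\|_{L^\infty(\omega)}^2/2$ and integrating in $x$ over $\omega$ gives precisely the asserted inequality after taking square roots. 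This is a standard slice-by-slice Poincar\'e argument; the only subtlety is the $x$-dependent upper limit $\eta(x,t)$, handled uniformly by the $L^\infty$-bound.

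For the Korn equality, I would start from the pointwise expansion
\[
|\D(\bv)|^2 \;=\; \tfrac12|\nabla\bv|^2 + \tfrac12\sum_{i,j=1}^2\partial_i v_j\,\partial_j v_i,
\]
so that the equality reduces to showing the cross term integrates to zero over $\Omega_\eta(t)$. The incompressibility constraint $\partial_1 v_1+\partial_2 v_2=0$ supplies the pointwise algebraic simplification $\sum_{i,j}\partial_i v_j\,\partial_j v_i=-2\det(\nabla\bv)$ in two dimensions, so it suffices to prove $\int_{\Omega_\eta(t)}\det(\nabla\bv)\,\dd\bx=0$. Here I would invoke the classical null-Lagrangian identity
\[
\det(\nabla\bv) \;=\; \partial_1(v_1\partial_2 v_2)-\partial_2(v_1\partial_1 v_2),
\]
which via the divergence theorem recasts the bulk integral as a boundary integral. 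The bottom piece vanishes by $\bv|_{z=0}=0$; the two lateral pieces cancel by $\omega$-periodicity in $x$; and the top piece on $z=\eta(x,t)$ vanishes once one uses the tangential kinematic condition $v_1(x,\eta(x,t),t)=0$ coming from \eqref{KinematicBC}, which must be understood as implicit in the admissible class of velocities.

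The main obstacle is precisely this top-boundary contribution: the Korn identity is sharp only if the horizontal velocity vanishes on $z=\eta(x,t)$, because both surviving terms in the boundary integral carry $v_1$ as a factor. Without it, a defect term of the form $\int_\omega v_1\bigl(-\partial_x\eta\,\partial_z v_2-\partial_x v_2\bigr)(x,\eta(x,t),t)\,\dd x$ would persist and only a Korn-type inequality could be obtained. Once this tangential no-slip condition is granted---which is natural in the FSI setting---both statements reduce to elementary one-dimensional calculus and an application of the divergence theorem, with the sharp constants $1/\sqrt{2}$ and $\sqrt{2}$ arising directly from the algebraic expansions above.
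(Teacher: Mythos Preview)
Your Poincar\'e argument is exactly the paper's: fundamental theorem of calculus in $z$ from the no-slip bottom, Cauchy--Schwarz, integrate in $z$ to pick up $\eta(x,t)^2/2$, then bound by $\|\eta(t)\|_{L^\infty}^2/2$ and integrate in $x$.

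For the Korn equality the paper gives no self-contained argument; it simply invokes the divergence-free condition together with the fact that the structure moves only vertically, and defers to \cite[Lemma~6]{CDEM} and \cite[Lemma~A.5]{LenRuz}. Your null-Lagrangian computation is precisely what those references do, so your approach is the same one, just written out. Your observation about the top boundary is exactly right and matches the paper's own caveat: the identity $\int_{\Omega_\eta(t)}\det(\nabla\bv)\,\dd\bx=0$ genuinely requires $v_1(x,\eta(x,t),t)=0$, which is \emph{not} contained in the function class stated in the proposition but comes from the kinematic coupling \eqref{KinematicBC}. The paper tacitly uses this when it says ``the structure displacement is only vertical''; without it the equality fails (a simple counterexample on $(0,1)^2$ with $\bv=(\sin(2\pi x)z,\,-\pi\cos(2\pi x)z^2)$ already gives $\int\det(\nabla\bv)\neq 0$). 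So your identification of the hidden hypothesis is correct, and your proof is complete once that condition is granted.
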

\begin{proof}
Utilizing the Cauchy-Schwarz inequality and the no-slip boundary condition
at $z=0$, we calculate: for a.e.~$t\in(0,T)$
\begin{align*}
\|\bv\|_{L^2(\Omega_\eta(t))}^2 &= \int_{\omega}
\int_{0}^{\eta(x,t)}\bv(x,z)^2\dd \bx
=\int_{\omega}\int_{0}^{\eta(x,t)}\left (\int_{0}^{z}\partial_z\bv(x,\zeta)\dd \zeta\right )^2\dd \bx
\\
&\leq \int_{\omega}\int_{0}^{\eta(x,t)}\left( z\int_{0}^{z}(\partial_z\bv)^2(x,\zeta)\dd \zeta\right)\dd \bx\\
&\leq \int_{\omega}\int_{0}^{\|\eta(t)\|_{L^{\infty}}}\left(z\int_{0}^{\eta(x,t)}(\partial_z\bv)^2(x,\zeta)\dd \zeta\right)\dd\bx
=\frac{1}{2} \|\eta(t)\|_{L^{\infty}(\omega)}^2 \|\partial_z\bv\|_{L^2(\Omega_\eta(t))}^2\,,
\end{align*}
which implies the Poincar\'e inequality (\ref{2.ineq:eps_P}). 

The Korn equality (\ref{2.ineq:eps_K}) follows directly from the fact that the structure displacement 
is only vertical and that the fluid velocity is divergence free (cf.~\cite[Lemma 6]{CDEM} 
or \cite[Lemma A.5]{LenRuz}).
\end{proof}

\subsection{Energy estimates}
Testing formally equations (\ref{1.eq:stokes}) and (\ref{1.eq:elast}) with assumed 
classical solutions $\bv$ and $\pa_t\eta$,
respectively, and integrating by parts yields the basic energy inequality:
for every $t\in (0,T)$
\begin{align}
\frac{1}{2}\|\bv(t)\|^2_{L^2(\Omega_{\eta}(t))}\nonumber
+2\int_0^t\!\!\int_{\Omega_{\eta}(s)}|\D(\nabla\bv)|^2\dd \bx\dd s\\
+\frac{\uvro}{2}\|\partial_t\eta(t)\|^2_{L^2(\omega)} \label{EI}
+\updelta\int_0^t\|\partial_t\partial_{x}\eta(s)\|^2_{L^2(\omega)}\dd s
+\frac{\upbeta}{2}\|\partial^2_x\eta(t)\|^2_{L^2(\omega)}\\
\leq \frac{\upbeta}{2}\|\partial_x^2 \eta_0\|^2_{L^2(\omega)}\nonumber
+\int_0^t\!\!\int_{\Omega_{\eta}(s)}\bs f\cdot\bv\,\dd\bx\dd s\,. 
\end{align}
Let us now estimate the force term. Employing the Cauchy-Schwarz, the Poincar\'e inequality
(\ref{2.ineq:eps_P}), assumption (S4) and conservation of the volume, we estimate:
\begin{align*}
\left|\int_0^t\int_{\Omega_\eta(s)}\bs f\cdot\bv\,\dd\bx\dd s\right|
&\leq \int_0^t\|\bs f\|_{L^2(\Omega_{\eta}(s))}\|\bv\|_{L^2(\Omega_{\eta}(s))}\dd s\\
&\leq C\left(\vol(\Omega_\eta(t))\right)^{1/2}
\int_0^t\|\eta(s)\|_{L^{\infty}(\omega)}\|\partial_z\bv\|_{L^2(\Omega_{\eta}(s))}\dd s\\
&\leq C\vol(\Omega_{\eta_0})\int_0^t\|\eta(s)\|_{L^{\infty}(\omega)}^2\dd s +
 \int_0^t\|\D(\bv)\|_{L^{2}(\Omega_{\eta}(s))}^2\dd s\,,
\end{align*}
where $C>0$ is a positive constant independent of all variables.

Therefore, the energy inequality (\ref{EI}) can be closed in the following form: for every 
$t\in(0,T)$
\begin{align}
\frac{1}{2}\|\bv(t)\|^2_{L^2(\Omega_{\eta}(t))}\nonumber
+\int_0^t\!\!\int_{\Omega_{\eta}(s)}|\D(\bv)|^2\dd \bx\dd s\\
+\frac{\uvro}{2}\|\partial_t\eta(t)\|^2_{L^2(\omega)} \label{EIEps}
+\updelta\int_0^t\|\partial_t\partial_{x}\eta(s)\|^2_{L^2(\omega)}\dd s
+\frac{\upbeta}{2}\|\partial^2_x\eta(t)\|^2_{L^2(\omega)}\\
\leq \frac{\upbeta}{2}\|\partial_x^2 \eta_0\|^2_{L^2(\omega)}
+ C\vol(\Omega_{\eta_0})\int_0^t\|\eta(s)\|_{L^{\infty}(\omega)}^2\dd s\,.
\nonumber
\end{align}

Next, we estimate the second term on the right-hand side. 
Using the continuity of the Sobolev embedding $H^1(\omega)\hookrightarrow L^\infty(\omega)$, 
there exists a constant $C_S>0$ such that for every $t\in(0,T)$ we have
$\|\eta(t)\|_{L^{\infty}(\omega)} \leq C_S\|\eta(t)\|_{H^1(\omega)}$.
Since the volume of $\Omega_\eta(t)$ is preserved, 
i.e.~$\int_\omega \eta(t) \dd x = \int_\omega \eta_0 \dd x$, the Poincar\'e inequality provides
\begin{equation*}
\|\eta(t) - \overline{\eta}_0\|_{L^2(\omega)} \leq C_P\|\partial_x\eta(t)\|_{L^2(\omega)}\,,
\end{equation*}
where $\overline\eta_0 = L^{-1}\int_\omega \eta_0 \dd x$ and $C_P > 0$ is the Poincar\'e constant.
The triangle inequality then gives
\begin{equation*}
\|\eta(t)\|_{L^2(\omega)} \leq \|\eta(t) - \overline{\eta}_0\|_{L^2(\omega)} + \|\overline{\eta}_0\|_{L^2(\omega)} 
\leq C\left(\|\partial_x\eta(t)\|_{L^2(\omega)} + \overline{\eta}_0\right)\,,
\end{equation*}
and another application of the Poincar\'e inequality for $\omega$-periodic functions yields
\begin{equation*}
\|\eta(t)\|_{H^1(\omega)} \leq C\left(\|\partial_{x}^2\eta(t)\|_{L^2(\omega)} + \overline{\eta}_0\right)\,.
\end{equation*}
Therefore, the right-hand side of \eqref{EIEps} can be controlled with
\begin{equation*}
\frac{\upbeta}{2}\|\partial_x^2 \eta_0\|^2_{L^2(\omega)}
+ C\vol(\Omega_{\eta_0})\int_0^t\|\pa_x^2\eta(s)\|_{L^{2}(\omega)}^2\dd s + 
Ct\vol(\Omega_{\eta_0})\overline{\eta}_0^2\,.
\end{equation*}
In order to close the energy estimate, we finally employ the Gr\"onwall inequality.
Namely,
\begin{align*}
\frac{\upbeta}{2}\|\partial^2_x\eta(t)\|^2_{L^2(\omega)} \leq 
\frac{\upbeta}{2}\|\partial_x^2 \eta_0\|^2_{L^2(\omega)} + 
Ct\vol(\Omega_{\eta_0})\overline{\eta}_0^2
+ C\vol(\Omega_{\eta_0})\int_0^t\|\pa_x^2\eta(s)\|_{L^{2}(\omega)}^2\dd s
\end{align*}
implies
\begin{equation*}
\|\partial^2_x\eta(t)\|^2_{L^2(\omega)} \leq 
\left(\|\partial_x^2 \eta_0\|^2_{L^2(\omega)} + 
2Ct\vol(\Omega_{\eta_0})\overline{\eta}_0^2/\upbeta \right)
\exp\left(Ct\vol(\Omega_{\eta_0})/\upbeta\right)\,.
\end{equation*}
Taking into account the scalings (S1)-(S2) and smallness of the initial data (S3) we find
\begin{equation}\label{2.ineq:paxxeta}
\|\partial^2_x\eta(\hat t)\|^2_{L^2(\omega)} \leq C\eps^2(1 + \hat t)\exp(C\hat t) \leq C\eps^2
\end{equation}
for all $\hat t\in(0,\hat T)$, where $\hat t = t/\T$ and $\hat T$ is the rescaled time horizon.
Employing (\ref{2.ineq:paxxeta}) and the Korn equality 
(\ref{2.ineq:eps_K}) in (\ref{EIEps}), together with the scalings (S1)-(S2) and smallness of the initial data, 
we arrive to the energy estimate: for every $\hat t\in(0,\hat T)$
\begin{align}
\frac{1}{2}\|\bv(\hat t)\|^2_{L^2(\Omega_{\eta}(\hat t))}\nonumber
+\frac{\eps^{-2}}{2}\int_0^{\hat t}\!\!\int_{\Omega_{\eta}(s)}|\nabla\bv|^2\dd \bx\dd s\\
+\frac{\hat\uvro\eps^{5}}{2}\|\partial_{\hat t}\eta(\hat t)\|^2_{L^2(\omega)} \label{EIEps3}
+\hat\updelta\eps^{2-r}\int_0^{\hat t}\|\partial_{\hat t}\partial_{x}\eta(s)\|^2_{L^2(\omega)}\dd s
+\frac{\hat\upbeta \eps^{-1}}{2}\|\partial^2_x\eta(t)\|^2_{L^2(\omega)} & \leq C\eps\,.
\end{align}
Neglecting the hats in the sequel, we have proved the following key energy estimate.
\begin{proposition}\label{prop:energyineq}
Let $\bv$ and $\eta$ be classical solutions of the system \eqref{1.eq:stokes}-\eqref{KinematicBC}
under scaling assumptions (S1)-(S4), then for every $ t\in(0, T)$ it holds
\begin{align}
\frac{1}{2}\|\bv(t)\|^2_{L^2(\Omega_{\eta}(t))}\nonumber
&+\frac{\eps^{-2}}{2}\int_0^t\!\!\int_{\Omega_{\eta}(s)}|\nabla\bv|^2\dd \bx\dd s\\
+\frac{\uvro\eps^{5}}{2}\|\partial_t\eta(t)\|^2_{L^2(\omega)} \label{EIEkey}
&+\updelta\eps^{2-r}\int_0^t\|\partial_t\partial_{x}\eta(s)\|^2_{L^2(\omega)}\dd s
+\frac{\upbeta \eps^{-1}}{2}\|\partial^2_x\eta(t)\|^2_{L^2(\omega)}
\leq C\eps\,,
\end{align}
where $C>0$ is independent of $\eps$ and all variables.
\end{proposition}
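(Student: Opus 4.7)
The plan is to test the momentum equation \eqref{1.eq:stokes} against $\bv$ and the visco-elastic plate equation \eqref{1.eq:elast} against $\partial_t\eta$, then add the two identities. By virtue of the kinematic condition \eqref{KinematicBC}, the boundary contributions from the Cauchy stress $\sigma_f\bn^\eta$ on the free interface cancel against the right-hand side of the plate equation, and the convective term $(\bv\cdot\nabla)\bv$ combines (via a Reynolds transport calculation using $\diver\bv=0$, no-slip at $z=0$, periodicity in $x$, and kinematic matching at $z=\eta$) with $\partial_t\bv\cdot\bv$ to produce exactly $\frac{d}{dt}\tfrac12\|\bv(t)\|_{L^2(\Omega_\eta(t))}^2$. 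This yields the basic energy inequality \eqref{EI}.

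Next, I would control the body-force term on the right of \eqref{EI} with Cauchy--Schwarz, assumption (S4), the Poincar\'e inequality \eqref{2.ineq:eps_P}, Young's inequality (to absorb $\|\partial_z\bv\|_{L^2}^2$ into the dissipation on the left using Korn's equality \eqref{2.ineq:eps_K}), and the volume conservation identity \eqref{VolumeCon}, arriving at \eqref{EIEps}. The $\|\eta(t)\|_{L^\infty}^2$ factor is then reshaped through the Sobolev embedding $H^1_\#(\omega)\hookrightarrow L^\infty(\omega)$ followed by two Poincar\'e--Wirtinger inequalities on the periodic cell (the first using volume conservation to pin down the mean $\overline{\eta}_0$), yielding $\|\eta(t)\|_{L^\infty}\leq C\bigl(\|\partial_x^2\eta(t)\|_{L^2}+\overline{\eta}_0\bigr)$. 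Inserting this back into \eqref{EIEps} and keeping only the $\upbeta\|\partial_x^2\eta\|_{L^2}^2/2$ term on the left puts the inequality in a form ready for Gr\"onwall.

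The main obstacle is making the Gr\"onwall step quantitative in $\eps$: on the physical time scale the horizon is $O(\eps^{-2})$, so the exponential factor $\exp(Ct\,\vol(\Omega_{\eta_0})/\upbeta)$ would blow up as $\eps\downarrow0$ if handled carelessly. The scalings (S1)--(S3) are tuned precisely to defuse this: after substituting $t=\eps^{-2}\hat t$, $\vol(\Omega_{\eta_0^\eps})=O(\eps)$, $\upbeta=\hat\upbeta\eps^{-1}$, and $\overline{\eta_0^\eps}=O(\eps)$, the exponent collapses to $C\hat t=O(1)$ while the polynomial prefactor picks up only the desired $O(\eps^2)$, producing \eqref{2.ineq:paxxeta}. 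Substituting \eqref{2.ineq:paxxeta} back into \eqref{EIEps}, invoking Korn's equality \eqref{2.ineq:eps_K} to replace $\|D(\bv)\|_{L^2}^2$ by $\tfrac12\|\nabla\bv\|_{L^2}^2$ on the left, and multiplying through by the $\eps$-powers dictated by (S1)--(S2) yields \eqref{EIEps3}, which is exactly \eqref{EIEkey} after dropping the hats.
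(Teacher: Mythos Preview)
Your proposal is correct and follows essentially the same route as the paper: derive the basic energy identity \eqref{EI} by testing with $(\bv,\partial_t\eta)$, control the force term via Cauchy--Schwarz, \eqref{2.ineq:eps_P}, (S4), volume conservation and Young to reach \eqref{EIEps}, reduce $\|\eta\|_{L^\infty}$ to $\|\partial_x^2\eta\|_{L^2}+\overline{\eta}_0$ through the Sobolev embedding and two periodic Poincar\'e inequalities, close with Gr\"onwall, and then insert the scalings (S1)--(S3) and the time change $t=\eps^{-2}\hat t$ to obtain \eqref{2.ineq:paxxeta} and hence \eqref{EIEkey}. Your explicit discussion of why the Gr\"onwall exponent stays $O(1)$ after rescaling is exactly the point the paper is exploiting in passing from \eqref{EIEps} to \eqref{2.ineq:paxxeta}.
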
 

\subsection{Weak and strong solutions}
Neglecting hats in the notation, in further we work with FSI problem \eqref{1.eq:stokes}-\eqref{KinematicBC}
under scaling assumptions (S1)-(S4), i.e.~the system is considered in the rescaled time.
Let us first introduce appropriate solution spaces.
The fluid solution space will depend on the displacement $\eta$. If we denote
\begin{equation*}
{V}_F(t)=\left\{\bv\in H^1(\Omega_{\eta}(t))\ :\ \diver\bv=0,\;
 \bv|_{z=0}=0,\ \bv \text{ is } \omega-\text{periodic in }x\right\},
\end{equation*}
then the above energy estimate suggests that, 
for a given time horizon $T>0$ appropriate fluid solution space is 
\begin{equation*}
{\mathcal V}_F(0,T;\Omega_{\eta}(t))
= L^{\infty}(0,T;L^2(\Omega_{\eta}(t)))\cap L^2(0,T;{V}_F(t))\,,
\end{equation*}
while for the structure, again based on the energy estimate, we choose the solution space to be 
\begin{equation*}
{\mathcal V}_S(0,T;\omega)
= W^{1,\infty}(0,T;L^2(\omega))\cap 
L^{\infty}(0,T;H^2_{\#}({\omega}))\cap H^1(0,T;H^1_\#(\omega))\,.
\end{equation*}

Employing the Reynolds transport theorem we find
\begin{align*}
\int_0^T\!\!\int_{\Omega_\eta(t)}\pa_t\bv\cdot\vphi\,\dd\bx\dd t =
- \int_0^T\!\!\int_{\Omega_\eta(t)}\bv\cdot\partial_t\vphi\,\dd\bx\dd t
- \eps^2\int_0^T\!\!\int_{\omega}(\pa_t\eta)^2\varphi_2\,\dd x\dd t
\end{align*}
for all test functions $\vphi\in C^1_c([0,T);\mathcal{V}_F(t))$. On the other hand,
performing integration by parts in the convective term we have
\begin{align*}
\eps^{-2}\int_0^T\!\!\int_{\Omega_\eta(t)}(\bv\cdot\nabla)\bv\cdot\vphi\,\dd \bx\dd t = 
- \eps^{-2}\int_0^T\!\!\int_{\Omega_\eta(t)}(\bv\cdot\nabla)\vphi\cdot\bv\,\dd \bx\dd t
+ \eps^{2}\int_0^T\!\!\int_{\omega}(\pa_t\eta)^2\varphi_2\,\dd x\dd t\,.
\end{align*}
In both inequalities we used kinematic boundary condition (\ref{KinematicBC}), which after 
rescaling of the time reads $\bv=(0,\eps^2\partial_t\eta)$ on $\omega$.
Therefore, summing up the last two identities we obtain the weak formulation of the inertial term
as in the following definition.

\begin{definition}\label{def:weaksol}
We call $(\bv,\eta)\in{\mathcal V}_F(0,T;\Omega_{\eta}(t))\times {\mathcal V}_S(0,T;\omega)$ a 
{\em weak solution} of the FSI problem (\ref{1.eq:stokes})-(\ref{KinematicBC}) 
if for every $(\vphi,\psi)\in C^1_c([0,T);\mathcal{V}_F(t)\times H^2_{\#}({\omega}))$ 
satisfying $\vphi(x,\eta(x,t),t)=\psi(x,t){\bf e}_z$ it holds
\begin{align}
-\int_0^T\!\!\int_{\Omega_\eta(t)}\left(\bv\cdot\partial_t\vphi
+\eps^{-2}(\bv\cdot\nabla)\vphi\cdot\bv\right)\dd \bx\dd t
+2\eps^{-2}\int_0^T\!\!\int_{\Omega_\eta(t)}\D(\bv):\nabla\vphi\,\dd\bx\dd t
\nonumber
\\
-\uvro\eps^{3}\int_0^T\!\!\int_{\omega}\partial_t\eta\partial_t\psi\,\dd x\dd t
+\updelta\eps^{-r}\int_0^T\!\!\int_{\omega}\partial_{x}\partial_t\eta\pa_x\psi\, \dd x\dd t
+\upbeta\eps^{-3}\int_0^T\!\!\int_{\omega}\partial^2_{x}\eta\partial^2_{x}\psi\, \dd x\dd t
\label{WeakFormulation}
\\
= \eps^{-2}\int_0^{T}\!\!\int_{\Omega_\eta(t)}\bs f\cdot\vphi\,\dd\bx\dd t
\nonumber
\end{align}
and the time rescaled version of \eqref{KinematicBC} is satisfied in the sense of traces. Moreover, the energy inequality 
(\ref{EIEkey}) is satisfied for a.e.~$t\in(0,T)$.
\end{definition}
The existence of weak solutions is by now well-established in the literature, see e.g. \cite{CDEM,BorSun}. 
However, the question of time globality of weak solutions is rather open. More precisely, the 
existence results assert the following: either $T=\infty$ or 
$\displaystyle\lim_{t\uparrow T}\min_{x\in\overline\omega}\eta(x,t) = 0$, i.e.~weak solutions exist 
as long as there is no contact between the elastic and rigid boundary.
Even though there are results that contact will not occur in the case when the structure is 
rigid \cite{HilTak}, to the best of our 
knowledge there are no global in time existence results for weak solutions to problem 
\eqref{1.eq:stokes}-\eqref{KinematicBC}. 

\begin{remark}
Unlike in the case of the standard Navier-Stokes equations with rigid walls, 
the pressure in problem \eqref{1.eq:stokes}-\eqref{KinematicBC} uniquely determined. 
Physically the reason is that the structure is deformable so it "can feel" the pressure. 
\end{remark}

Since the contact issue is unresolved for weak solutions, our subsequent analysis relies on the
concept of strong solutions for which the global well-posedness is available.
We start with the definition taken from \cite[cf.~Definition 1]{grandmont2016existence}.

\begin{definition}\label{def:strongsol}
We call $(\bv,p,\eta)$ a 
{\em strong solution} of the FSI problem  (\ref{1.eq:stokes})-(\ref{KinematicBC}) on $(0,T)$
if
\begin{align*}
\eta &\in H^2(0,T;L^2(\omega))\cap L^2(0,T;H^4_\#(\omega))\,,\quad \eta^{-1}\in L^\infty(0,T;L^\infty(\omega))\,,\\
\bv &\in H^1(0,T;L^2(\Omega_\eta(t)))\cap L^2(0,T;H^2(\Omega_\eta(t)))\,,\\
p&\in L^2(0,T;H^1(\Omega_\eta(t)))\,
\end{align*}
and equations \eqref{1.eq:stokes}--\eqref{1.eq:divfree} are satisfied a.e.~in $\Omega_\eta(t)\times(0,T)$, equations \eqref{1.eq:elast}--\eqref{KinematicBC}
are satisfied a.e.~in $\omega\times(0,T)$, and initial conditions and the no-slip boundary condition are satisfied a.e.
\end{definition}

The global well-posedness result is asserted by the following 
theorem also taken from \cite[cf.~Theorem 1]{grandmont2016existence}.
\begin{theorem}\label{ExistenceGlobal}
Let $\eta_0\in H_\#^3(\omega)$ be strictly positive and $T>0$ given time horizon. For every $\eps>0$ there exists a 
unique global-in-time strong solution $(\bv^\eps,p^\eps,\eta^\eps)$ to problem 
\eqref{1.eq:stokes}-\eqref{KinematicBC} with initial conditions 
$\bv_0=0$, $\eta_0^\eps = \eps{\eta}_0$, $\eta_1=0$ and the right hand 
side $\bs f\in L^\infty(0,T;L^\infty(\R^2))$. 
\end{theorem}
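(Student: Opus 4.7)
The plan is to prove global well-posedness via a fixed-geometry reformulation, a local fixed-point construction, and a no-contact continuation argument—the standard strategy for 2D/1D FSI problems with a visco-elastic plate as developed by Grandmont and Hillairet.

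First, I would fix the fluid geometry by introducing an ALE-type diffeomorphism $\Psi^\eta(x,y,t) = (x,y\eta(x,t))$ carrying the reference rectangle $\Omega = \omega\times(0,1)$ onto $\Omega_\eta(t)$. Under pullback, the Navier-Stokes system becomes a second-order parabolic system on the fixed domain with coefficients that depend smoothly on $\eta$ as long as $\eta$ stays bounded away from zero; the divergence-free condition becomes a weighted divergence condition and the kinematic coupling \eqref{KinematicBC} transfers to a Dirichlet trace on $y=1$. The plate equation remains posed on $\omega$, with its right-hand side a nonlinear functional of the transformed $(\bv,p)$ and of $\eta$ via the Jacobian $J^\eta$ and the normal $\bn^\eta$.

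For local-in-time existence, the idea is to run a fixed-point iteration in the space
\begin{equation*}
\mathcal{X}_{T,\delta} = \{\tilde\eta\in H^2(0,T;L^2(\omega))\cap L^2(0,T;H^4_\#(\omega))\ :\ \tilde\eta(0)=\eps\eta_0,\ \pa_t\tilde\eta(0)=0,\ \min\tilde\eta\geq\delta\},
\end{equation*}
where $\delta$ is chosen strictly smaller than $\min(\eps\eta_0)/2$. Given $\tilde\eta$, one solves the resulting linear Stokes-type system with $\tilde\eta$-dependent coefficients via maximal parabolic regularity to obtain $(\bv,p)$ in the classes of Definition \ref{def:strongsol}; then one inserts the resulting normal stress as the load in the fourth-order parabolic plate equation—parabolicity being supplied by the visco-elastic term $\updelta\pa_x^2\pa_t\eta$—to extract a new $\eta$ of the required regularity. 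For $T>0$ small enough, this map is a contraction on $\mathcal{X}_{T,\delta}$ in a slightly weaker topology, and its fixed point is a strong solution on $(0,T)$.

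Global existence then follows by continuation. Let $T^\ast$ denote the maximal existence time. The standard blow-up alternative asserts that if $T^\ast<\infty$, then either the higher-order norms explode or $\min_x\eta(\cdot,t)\to 0$ as $t\uparrow T^\ast$. The energy estimate, of the type of Proposition \ref{prop:energyineq}, together with higher-order bootstrap—differentiating the system in time, testing by $\pa_t\bv$, and exploiting the visco-elastic dissipation of $\pa_x\pa_t\eta$—gives uniform $H^4_\#$-control of $\eta$ and $H^2$-control of $\bv$ on any bounded time interval, thus ruling out norm blow-up. The hard part will be the no-contact result: the visco-elastic term plays a decisive role by upgrading $\pa_t\eta$ to $L^2(0,T;H^1_\#)$, which combined with a lubrication-type estimate for the pressure (whose singular behavior as $\eta\downarrow 0$ generates an effective repulsion) yields a Gr\"onwall-type bound preventing contact in finite time. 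Without visco-elasticity ($\updelta=0$), global existence even for weak solutions is open, underlining that this no-contact step is the true crux of the argument.
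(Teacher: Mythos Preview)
The paper does not prove Theorem \ref{ExistenceGlobal} at all: it is quoted directly from \cite[Theorem 1]{grandmont2016existence}, with only the remark that the argument extends trivially to accommodate the volume force $\bs f$. Your sketch is a faithful high-level outline of precisely the Grandmont--Hillairet strategy that the paper invokes by citation, so there is no discrepancy in approach---you have simply filled in what the paper leaves as a black box.

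One small caveat: your description of the no-contact mechanism as a ``lubrication-type estimate for the pressure \ldots\ generating an effective repulsion'' is a bit impressionistic. In the actual Grandmont--Hillairet argument the key ingredient is a distance estimate obtained by testing the structure equation with $\partial_x^2\eta$ and building a corresponding divergence-free fluid test function via a stream function (exactly the device reused in the present paper in Proposition \ref{prop:no_contact}); this yields control of $\|1/\eta\|_{L^\infty_t L^1_x}$ jointly with $\|\partial_x^3\eta\|_{L^2_{t,x}}$, from which the $L^\infty$ lower bound on $\eta$ follows. The visco-elastic dissipation does enter, but through the a priori bound on $\partial_{tx}\eta$ rather than through any direct pressure-repulsion mechanism. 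This is a refinement of presentation rather than a genuine gap.
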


\begin{remark}
In fact the strong solutions in \cite{grandmont2016existence} have been constructed for FSI problem without the fluid volume force. 
But the result can be straightforwardly extended to include the right hand side $\bef$ in \eqref{1.eq:stokes}.
\end{remark}

\section{Uniform estimates}\label{sec:UE}

\subsection{Uniform estimates for the structure displacements}
Taking into account the volume preservation, the energy estimate (\ref{EIEkey}) immediately
gives
\begin{corollary} Let $(\eta^\eps)$ be a family of structure displacements constructed 
in Theorem \ref{ExistenceGlobal}. There exists a constant $C>0$, independent of $\eps$, such that 
\begin{equation}\label{DisplacementEstimate}
\|\eta^\eps\|_{L^{\infty}(0,T;H^2_\#(\omega))}\leq C\eps\,.
\end{equation}
\end{corollary}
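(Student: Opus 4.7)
The statement is essentially a direct packaging of the energy estimate from Proposition \ref{prop:energyineq} together with the volume conservation identity \eqref{VolumeCon} and two applications of the periodic Poincar\'e inequality. My plan is therefore to extract the three $L^2$ pieces of the $H^2$ norm separately and check that each is $O(\eps)$.

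First, from \eqref{EIEkey} I would read off immediately that
\[
\tfrac{\upbeta\eps^{-1}}{2}\|\pa_x^2\eta^\eps(t)\|_{L^2(\omega)}^2 \leq C\eps
\quad \text{for a.e.\ } t\in(0,T),
\]
which gives $\|\pa_x^2\eta^\eps(t)\|_{L^2(\omega)} \leq C\eps$ uniformly in $t$ (and in $\eps$).

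Next, I would recover the lower-order pieces. Since $\eta^\eps(\cdot,t)$ is $\omega$-periodic, so is $\pa_x\eta^\eps(\cdot,t)$, and moreover $\pa_x\eta^\eps$ has zero mean on $\omega$. Hence the Poincar\'e inequality for zero-mean periodic functions yields
\[
\|\pa_x\eta^\eps(t)\|_{L^2(\omega)} \leq C_P\|\pa_x^2\eta^\eps(t)\|_{L^2(\omega)} \leq C\eps.
\]
For the $L^2$ norm of $\eta^\eps$ itself, I would use volume conservation \eqref{VolumeCon}, which combined with (S3) gives
\[
\overline{\eta^\eps}(t) := \tfrac{1}{|\omega|}\int_\omega \eta^\eps(x,t)\,\dd x = \overline{\eta^\eps_0} = \eps\,\overline{\eta_0},
\]
so that $\|\overline{\eta^\eps}(t)\|_{L^2(\omega)} \leq C\eps$. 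Applying Poincar\'e once more to $\eta^\eps(\cdot,t) - \overline{\eta^\eps}(t)$ and using the triangle inequality then gives
\[
\|\eta^\eps(t)\|_{L^2(\omega)} \leq \|\eta^\eps(t)-\overline{\eta^\eps}(t)\|_{L^2(\omega)} + \|\overline{\eta^\eps}(t)\|_{L^2(\omega)} \leq C_P\|\pa_x\eta^\eps(t)\|_{L^2(\omega)} + C\eps \leq C\eps.
\]

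Summing these three bounds and taking the essential supremum in $t\in(0,T)$ yields the claimed estimate \eqref{DisplacementEstimate}. There is no real obstacle here, since all the ingredients have been set up just above: the energy estimate supplies the leading-order control with the crucial $\eps^{-1}$ weight in front of $\|\pa_x^2\eta^\eps\|_{L^2}^2$, the periodic Poincar\'e chain transfers that control down to $\pa_x\eta^\eps$, and the smallness assumption (S3) on the initial datum (together with incompressibility) fixes the mean value at the correct order $O(\eps)$. The only point to be mildly careful about is that the Poincar\'e step used in the main text for the a priori argument handled $\eta - \overline{\eta_0}$ with a nonzero mean, whereas here, since we already know $\overline{\eta^\eps}(t) = \eps\overline{\eta_0}$, the mean is automatically of the right order and no Gr\"onwall loop is needed.
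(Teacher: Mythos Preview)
Your proposal is correct and follows essentially the same approach as the paper: extract $\|\pa_x^2\eta^\eps\|_{L^2}\leq C\eps$ from the energy estimate \eqref{EIEkey}, then use the periodic Poincar\'e inequality twice together with the volume conservation \eqref{VolumeCon} and (S3) to control the lower-order pieces of the $H^2$ norm. Your write-up is in fact slightly more explicit than the paper's (e.g.\ spelling out why $\pa_x\eta^\eps$ has zero mean), but the argument is the same.
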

\begin{proof}
Employing the Poincar\'e inequality twice, we find
\begin{align*}
&\|\eta^\eps\|_{L^{\infty}(0,T;L^2(\omega))}^2 + \|\pa_x\eta^\eps\|_{L^{\infty}(0,T;L^2(\omega))}^2 + 
\|\pa_x^2\eta^\eps\|_{L^{\infty}(0,T;L^2(\omega))}^2\\ &\quad \leq
C\|\pa_x\eta^\eps\|_{L^{\infty}(0,T;L^2(\omega))}^2 + C\overline\eta_0^2 + 
\|\pa_x^2\eta^\eps\|_{L^{\infty}(0,T;L^2(\omega))}^2\\
&\quad \leq C\left(\|\pa_x^2\eta^\eps\|_{L^{\infty}(0,T;L^2(\omega))}^2 + \overline\eta_0^2\right)\,.
\end{align*}
Energy inequality (\ref{EIEkey}) and assumption on smallness of the initial data now yield the
statement.
\end{proof}

Theorem \ref{ExistenceGlobal} states that for every $\eps>0$ there exists a
unique solution $(\bv^{\eps},\eta^{\eps})$ such that $\eta^{\eps}(x,t)>0$ for 
a.e.~$(x,t)\in \omega\times(0,\infty)$. 
On the other hand, from the previous corollary we conclude that $\eta^{\eps}(x,t)\to 0$ 
as $\eps\to 0$. Since our goal is to derive the effective equation for the first approximation 
of $\eta^{\eps}$ which is of the form $\eps h(x,t)$, our first step is to prove that $h(x,t)$ 
is strictly positive and uniformly bounded from below with a positive constant. This is precisely
the statement of the following proposition.

\begin{proposition}\label{prop:no_contact}
Let $(\eta^\eps)$ be a family of structure displacements constructed in Theorem \ref{ExistenceGlobal}.
There exists a constant $c>0$, independent of $\eps$, such that 
\begin{equation}\label{3.est:no_contact}
\frac{\eta^\eps(x,t)}{\eps} \geq c\quad\text{for a.e.~}(x,t)\in \omega\times(0,T)\,.
\end{equation}   
\end{proposition}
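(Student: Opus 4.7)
My plan is to prove the uniform lower bound on the rescaled height $h^\eps := \eta^\eps/\eps$ in three stages: uniform $H^2$-regularity in space, weak-in-time control coming from a Reynolds-type flux identity, and a compactness-plus-identification argument that transfers strict positivity from the limit thin-film equation.

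\emph{Step 1 (uniform spatial regularity).} Since $\eta^\eps = \eps h^\eps$, the bending term in (\ref{EIEkey}) reads $\tfrac12\upbeta\eps\|\pa_x^2 h^\eps\|_{L^2(\omega)}^2 \leq C\eps$, yielding $\|\pa_x^2 h^\eps\|_{L^\infty(0,T;L^2(\omega))} \leq C$ independently of $\eps$. Volume conservation (\ref{VolumeCon}) gives $\int_\omega h^\eps(\cdot,t)\,dx \equiv \int_\omega\eta_0\,dx =: m > 0$, so two applications of the Poincar\'e inequality for periodic functions with prescribed mean promote this to $\|h^\eps\|_{L^\infty(0,T;H^2_\#(\omega))} \leq C$. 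The one-dimensional embedding $H^2(\omega)\hookrightarrow C^{1,1/2}(\overline\omega)$ then delivers uniform spatial equicontinuity and a pointwise bound $\eta^\eps \leq C\eps$.

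\emph{Step 2 (weak-in-time control).} The divergence-free condition integrated along vertical fibres, combined with the rescaled kinematic condition $\bv|_{z=\eta^\eps} = (0,\eps^2\pa_t\eta^\eps)$, yields the Reynolds-type identity
\begin{equation*}
\eps^3\,\pa_t h^\eps + \pa_x q^\eps = 0, \qquad q^\eps(x,t) := \int_0^{\eta^\eps(x,t)} v_1^\eps(x,z,t)\,dz.
\end{equation*}
Cauchy--Schwarz in $z$, together with $\eta^\eps \leq C\eps$, the Poincar\'e inequality (\ref{2.ineq:eps_P}), and the energy estimate (\ref{EIEkey}), gives $\|\bv^\eps\|_{L^2(0,T;L^2(\Omega_{\eta^\eps}))}^2 \leq C\eps^5$ and hence $\|q^\eps\|_{L^2(0,T;L^2(\omega))} \leq C\eps^3$. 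Dividing by $\eps^3$ yields $\|\pa_t h^\eps\|_{L^2(0,T;H^{-1}_\#(\omega))} \leq C$ uniformly in $\eps$.

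\emph{Step 3 (compactness and identification).} The Aubin--Lions--Simon lemma applied to the triple $H^2_\#\hookrightarrow\hookrightarrow H^s_\#\hookrightarrow H^{-1}_\#$ (any $3/2 < s < 2$) extracts a subsequence $h^{\eps_n}\to h$ strongly in $C([0,T];C^1(\overline\omega))$, with $h\geq 0$ and $h(\cdot,0)=\eta_0$. Define $t^* := \sup\{\tau\in[0,T] : \min_\omega h(\cdot,s)\geq \tfrac12\min_\omega\eta_0 \text{ for all } s\in[0,\tau]\}$, which is strictly positive by continuity and the assumption $\eta_0 > 0$. On $[0,t^*)$ both $h$ and, for $n$ large, $h^{\eps_n}$ are bounded away from zero, so one can pass to the limit in the FSI system (by the arguments developed in Section~\ref{sec:RM}) and identify $h$ with the unique positive classical solution of (\ref{1.eq:6tf}) issuing from $\eta_0$. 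Since that classical solution stays bounded below by some $c>0$ on all of $[0,T]$, maximality forces $t^* = T$. Strong convergence then gives $h^{\eps_n}(x,t)\geq c/2$ for all large $n$; as every subsequence of $(h^\eps)$ admits a further subsequence yielding the same unique limit, the bound (\ref{3.est:no_contact}) extends to the full family.

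The main obstacle is a mild logical circularity: identifying the limit as a solution of (\ref{1.eq:6tf}) uses strict positivity of $h^{\eps_n}$, which is precisely what one wants to establish. The resolution is the continuation argument on the expanding interval $[0,t^*)$ combined with the global well-posedness (and uniform lower bound) of the positive classical solution of (\ref{1.eq:6tf}). This means that the proof of Proposition~\ref{prop:no_contact} is logically interleaved with the passage-to-the-limit analysis of Section~\ref{sec:RM}, which must be carried out with care on the open time interval where positivity holds by continuity.
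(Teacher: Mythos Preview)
Your approach is genuinely different from the paper's. The paper obtains the uniform lower bound by a direct quantitative estimate: following Grandmont--Hillairet, one tests the structure equation with $\partial_x^2\eta^\eps$ and the fluid equation with the divergence-free lift $\nabla^\perp\psi^\eps$ built from the stream function (\ref{def:stream_f}). This produces the identity (\ref{Distance1}), whose left-hand side contains $\|6/\eta^\eps(t)\|_{L^1(\omega)}$; all right-hand side terms are then bounded using the stream-function estimates of Lemma~\ref{StreamEstimates} and the energy inequality (\ref{EIEkey}), yielding $\|\eps/\eta^\eps\|_{L^\infty(0,T;L^1(\omega))}\leq C$ directly. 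The $L^\infty$ bound (\ref{DistanceFinal}) follows by combining this with the $H^2$ control. This argument is self-contained and does not touch the limit equation at all.

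Your compactness-and-identification route has a genuine gap that the continuation argument does not close. You invoke ``the global well-posedness (and uniform lower bound) of the positive classical solution of (\ref{1.eq:6tf})'' as an external input, but in the paper this is Proposition~4.2, whose positivity statement is \emph{derived from} Proposition~\ref{prop:no_contact} (see the first line of its proof). So the circularity you flag as ``mild'' is not resolved: you are assuming precisely the consequence of what you want to prove. To make your strategy work you would need an independent positivity proof for (\ref{1.eq:6tf})---for instance via the entropy $\int_\omega h^{-1}\,\dd x$, whose dissipation is computed in the Remark closing Section~\ref{sec:RM}---but you do not supply one. A secondary issue: your threshold $\tfrac12\min_\omega\eta_0$ in the definition of $t^*$ need not be smaller than the global minimum of the thin-film solution, so the maximality step does not force $t^*=T$; one should instead take $t^*=\sup\{\tau:\min_\omega h(\cdot,s)>0\text{ on }[0,\tau]\}$. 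Even with these fixes, your proof of Proposition~\ref{prop:no_contact} would require carrying out essentially all of Section~\ref{sec:RM} on a growing time interval, which is a substantial restructuring compared to the paper's direct and self-contained estimate.
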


\noindent In the proof, we follow the arguments from \cite{grandmont2016existence} and adapt it to our setting. 
More precisely, we reprove \cite[Proposition 3]{grandmont2016existence}, but taking into 
account the scaling assumptions on the coefficients, initial data, and the energy 
estimate \eqref{EIEkey}. 
We will not repeat every detail here, but we focus on the estimates involving the small 
parameter $\eps$. The main idea is to test equation \eqref{1.eq:elast} 
with $\partial_x^2\eta^{\eps}$. Therefore, we need to construct the corresponding 
divergence-free test function for the fluid equation \eqref{1.eq:stokes}, which 
satisfies the kinematic coupling condition. This is achieved by defining the stream function
\begin{align}\label{def:stream_f}
\psi^{\eps}(x,z,t)=\partial_x\eta^{\eps}(x,t)\chi\left(\frac{z}{\eta^{\eps}(x,t)}\right),  
\end{align}
where $\chi(z)=z^2(3-2z)$ is a cut-off function. Necessary estimates for the stream function are 
collected in the following lemma, which is a version of \cite[Propostion 8]{grandmont2016existence} 
adapted to our setting.
\begin{lemma}[Stream function estimates]\label{StreamEstimates}
Let $T>0$ be a given time horizon, let $\eta^\eps$ be the structure displacement
component of the global strong solution from Theorem \ref{tm:main} and let $\psi^{\eps}$ be the
stream function defined by (\ref{def:stream_f}). Then the following 
estimates for the stream function hold:
\begin{align}
|\nabla\psi^{\eps}&(x,z,t)|  \leq C \left(|\partial_x^2\eta^{\eps}(x,t)|
+\frac{\eps}{\eta^{\eps}(x,t)}\right ),\quad\text{for all } (x,z,t)\in\Omega_{\eta^{\eps}}(t)\times(0,T)\,,\label{nabla:stream}\\
\|\partial_x\psi^{\eps} (t)&\|_{L^2(\Omega_{\eta^{\eps}}(t))}\leq C\eps^{3/2}\,,
\quad \text{for all } t\in (0,T)\,, 
\label{dx:stream}\\
\|\partial_y\psi^{\eps} (t)&\|_{L^2(\Omega_{\eta^{\eps}}(t))}\leq 
C \eps^{3/4}\left\|\frac{1}{\eta^{\eps}(t)}\right\|_{L^1(\omega)}^{1/4}\,,
\quad \text{for all } t\in (0,T)\,, \label{dy:stream}
\\
\|\partial_t\psi^{\eps}&\|_{L^2(\Omega_{\eta^{\eps}}(t)\times(0,T))}\leq C
\left(\eps^r + \eps^{-4}\int_0^T\|\partial^3_x\eta^{\eps}(s)\|_{L^2(\omega)}\dd s \right)^{1/2}\,, \label{dt:stream}
\\
\|\partial^2_x\psi^{\eps}&\|_{L^2(\Omega_{\eta^{\eps}}(t)\times(0,T))
}\leq C\left (\int_0^T \left(\eps\|\partial^3_x\eta^{\eps}(s)\|^2_{L^2(\omega)} 
+ \eps^{3/2}\|\partial^3_x\eta^{\eps}(s)\|^{3/2}_{L^2(\omega)}\right) \dd s   \right )^{1/2}. \label{ddx:stream}
\end{align}
\end{lemma}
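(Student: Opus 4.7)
The plan is to compute each derivative of $\psi^\eps$ explicitly by the chain rule, exploit that $\chi$ and $\chi'$ are uniformly bounded on $[0,1]$ where the normalized vertical variable $Y = z/\eta^\eps$ lives, and then convert each $L^2$ norm over the moving domain $\Omega_{\eta^\eps}(t)$ into an integral on the fixed reference strip $\omega\times(0,1)$ via the substitution $z = \eta^\eps(x,t)Y$. The inputs I would rely on throughout are the uniform bounds $\|\eta^\eps\|_{L^\infty(0,T;H^2_\#)} \le C\eps$ and $\|\partial_x\eta^\eps\|_{L^\infty} \le C\eps$ (the latter from the Sobolev embedding $H^2(\omega)\hookrightarrow W^{1,\infty}(\omega)$), both supplied by \eqref{DisplacementEstimate}, together with $\int_0^T\|\partial_t\partial_x\eta^\eps\|_{L^2}^2\,\dd t \le C\eps^{r-1}$ from \eqref{EIEkey}.

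To start, the direct computation
\begin{equation*}
\partial_z\psi^\eps = \frac{\partial_x\eta^\eps}{\eta^\eps}\chi'(z/\eta^\eps),\qquad \partial_x\psi^\eps = \partial_x^2\eta^\eps\,\chi(z/\eta^\eps) - \frac{(\partial_x\eta^\eps)^2 z}{(\eta^\eps)^2}\chi'(z/\eta^\eps),
\end{equation*}
combined with $z/\eta^\eps \le 1$ and $|\partial_x\eta^\eps| \le C\eps$ yields the pointwise bound \eqref{nabla:stream} at once. For the $L^2$ estimates \eqref{dx:stream} and \eqref{dy:stream}, I would square the relevant derivative, change variables $z = \eta^\eps Y$ to factor out normalized $Y$-integrals of powers of $\chi,\chi'$ (finite constants), and reduce matters to $\omega$-integrals such as $\|\eta^\eps\|_\infty\|\partial_x^2\eta^\eps\|_{L^2}^2 \le C\eps^3$ together with $\int(\partial_x\eta^\eps)^4/\eta^\eps\,\dd x$ and $\int(\partial_x\eta^\eps)^2/\eta^\eps\,\dd x$. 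The former is absorbed using $\|\partial_x\eta^\eps\|_\infty^4\|1/\eta^\eps\|_{L^1}$ combined with the (bootstrap) lower bound $\eta^\eps \ge c\eps$; the latter is interpolated by H\"older to retain the explicit $\|1/\eta^\eps\|_{L^1}^{1/2}$ factor that appears in \eqref{dy:stream}.

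For \eqref{dt:stream} and \eqref{ddx:stream}, differentiating once more yields
\begin{equation*}
\partial_t\psi^\eps = \partial_x\partial_t\eta^\eps\,\chi(z/\eta^\eps) - \frac{\partial_x\eta^\eps\,\partial_t\eta^\eps\,z}{(\eta^\eps)^2}\chi'(z/\eta^\eps),
\end{equation*}
and a longer expression for $\partial_x^2\psi^\eps$ whose leading term is $\partial_x^3\eta^\eps\,\chi(z/\eta^\eps)$ plus nonlinear combinations of $\partial_x^2\eta^\eps$ and $\partial_x\eta^\eps$. After squaring, changing variables, and integrating in time, the $\eps^r$-piece of \eqref{dt:stream} comes directly from $\|\eta^\eps\|_\infty\cdot\|\partial_x\partial_t\eta^\eps\|^2_{L^2(L^2)} \le C\eps\cdot C\eps^{r-1}$; the $\eps^{-4}\int_0^T\|\partial_x^3\eta^\eps\|_{L^2}\,\dd s$ contribution I would obtain from the cross-term after using the elasticity equation \eqref{1.eq:elast} to trade $\partial_t\eta^\eps$-factors for $\partial_x^3\eta^\eps$-factors via an integration by parts in $x$. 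The fractional exponent $3/2$ in \eqref{ddx:stream} arises from Gagliardo--Nirenberg interpolation of the nonlinear lower-order terms between $\|\partial_x^3\eta^\eps\|_{L^2}$ and the energy-level bound $\|\partial_x^2\eta^\eps\|_{L^2} \le C\eps$.

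The main obstacle is the uniform (in $\eps$) control of the apparently singular integrands $(\partial_x\eta^\eps)^k/(\eta^\eps)^l$ arising in \eqref{dx:stream}, \eqref{dt:stream}, and \eqref{ddx:stream}, where no $\|1/\eta^\eps\|$ factor is visible on the right-hand side. These require combining the smallness $|\partial_x\eta^\eps| \le C\eps$ with a lower bound for $\eta^\eps$ of order $\eps$ -- precisely the content of the subsequent Proposition~\ref{prop:no_contact}. The overall logical structure is therefore a continuation/bootstrap: the present lemma is applied on time intervals on which $\eta^\eps \ge c\eps$ (strict positivity being guaranteed by Definition~\ref{def:strongsol}), and the no-contact proposition then propagates this $O(\eps)$ lower bound to all of $[0,T]$ using these very estimates, closing the loop.
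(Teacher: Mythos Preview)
Your chain-rule computation parallels what is done inside \cite[Proposition~8]{grandmont2016existence}, which the paper simply cites and then combines with the $\eps$-scaled bounds \eqref{EIEkey} and \eqref{DisplacementEstimate}. The genuine gap in your argument is the treatment of the singular factors $(\partial_x\eta^\eps)^k/(\eta^\eps)^l$. You propose to bound these via the no-contact estimate $\eta^\eps\ge c\eps$, but that is circular: the present lemma is an input to the proof of Proposition~\ref{prop:no_contact}, and the ``bootstrap'' you sketch does not obviously close without tracking how the lemma's constants depend on the assumed lower bound. The correct device, which avoids circularity entirely, is the elementary pointwise inequality for nonnegative $C^2$ periodic functions
\[
(\partial_x\eta)^2 \;\le\; 2\,\eta\,\|\partial_x^2\eta\|_{L^\infty(\omega)}\,,
\]
obtained from Taylor's theorem and $\eta\ge 0$. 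This converts every occurrence of $(\partial_x\eta)^2/\eta$ into a factor $\le 2\|\partial_x^2\eta\|_{L^\infty}\le C\|\partial_x^3\eta\|_{L^2}$, with no lower bound on $\eta$ needed; all of \eqref{dx:stream}--\eqref{ddx:stream} then follow with $\eps$-independent constants.

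Your proposed mechanism for the $\eps^{-4}$ term in \eqref{dt:stream} is also off: it does not come from the elasticity equation~\eqref{1.eq:elast}. After the change of variables, the second term of $\partial_t\psi^\eps$ yields $\int_\omega(\partial_x\eta^\eps)^2(\partial_t\eta^\eps)^2/\eta^\eps\,\dd x$; the inequality above bounds $(\partial_x\eta^\eps)^2/\eta^\eps\le C\|\partial_x^3\eta^\eps\|_{L^2}$, and the factor $\eps^{-4}$ then enters directly from the energy estimate $\|\partial_t\eta^\eps\|_{L^\infty(0,T;L^2(\omega))}^2\le C\eps^{-4}$ contained in \eqref{EIEkey}.
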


\noindent The proof of Lemma \ref{StreamEstimates} is deferred to Appendix \ref{app:stream_est}.

\begin{proof}[Proof of Proposition \ref{prop:no_contact}]
Let us define the fluid test function $\vphi^{\eps}$ by 
$\vphi^{\eps}=\nabla^{\perp}\psi^{\eps}=(-\partial_y \psi^{\eps},\partial_x\psi^{\eps})$. Now we 
test equations \eqref{1.eq:stokes} and \eqref{1.eq:elast} with $\vphi^{\eps}$ 
and $\partial^2_x\eta^{\eps}$, respectively. Following the calculations in 
\cite[Equation (83)]{grandmont2016existence}) and rescaling time and data according to (S1)-(S2), 
we obtain (cf.~\cite[Equation (83)]{grandmont2016existence}): for a.e.~$t\in(0,T)$
\begin{align}\nonumber
&\frac{\updelta}{2\eps^{r}}\|\partial^2_x\eta^{\eps}(t)\|^2_{L^2(\omega)}
+\left\|\frac{6}{\eta^{\eps}(t)}\right\|_{L^1(\omega)}
+\frac{\upbeta}{\eps^3}\int_0^t\|\partial^3_x\eta^{\eps}(s)\|^2_{L^2(\omega)}\dd s
=\eps^3\uvro\int_0^t\|\partial_{tx}\eta^{\eps}(s)\|^2_{L^2(\omega)}\dd s\\
&+\eps^3\uvro\int_\omega\partial_t\eta^{\eps}(t)\partial^2_x\eta^{\eps}(t)\dd x \label{Distance1}
+ \left\|\frac{6}{\eta_0}\right\|_{L^1(\omega)}
+\eps^{-2}\underbrace{\int_0^t\int_{\Omega_{\eta}(s)}\partial^2_x\psi^{\eps}\big (\partial_x v^{\eps}_2-2\partial_y v^{\eps}_1\big )\dd\bx\dd s}_{I_1}\\
&+\underbrace{\int_0^t\int_{\Omega_{\eta}(s)}\big (\partial_t\bv^{\eps} + \eps^{-2}(\bv^{\eps}\cdot\nabla )\bv^{\eps})\cdot\vphi^{\eps}\dd\bx\dd s}_{I_2}
+\eps^{-2}\underbrace{\int_0^t\int_{\Omega_{\eta}(s)}\bs f\cdot\vphi^{\eps}\dd\bx\dd s}_{I_3}.\nonumber
\end{align}
In the sequel we estimate all terms on the right hand side. 
The first two terms can be estimated directly from \eqref{EIEkey}:
\begin{align}\nonumber
\eps^3\uvro\int_0^t\|\partial_{tx}\eta^{\eps}(s)\|^2_{L^2(\omega)}\dd s
&+\eps^3\uvro\int_\omega\partial_t\eta^{\eps}(t)\partial^2_x\eta^{\eps}(t)\dd x
\\ \label{FirstTwoTerms}
\leq C\eps^{r+2} 
&+ \eps^3\uvro\|\partial_t\eta^{\eps}(t)\|_{L^2(\omega)}\|\partial^2_x\eta^{\eps}(t)\|_{L^2(\omega)}
\leq C\eps^{r+2} + C\eps^2 \leq C\eps^2\,,
\end{align}
for $\eps$ small enough.
Moreover, by assumption on the initial data we have $\|6/\eta_0\|_{L^1(\omega)}=C/\eps$. 

The remainder of the proof consists of estimating integral terms $I_1$, $I_2$ and $I_3$,
where we repeatedly use the stream function estimates from Lemma \ref{StreamEstimates}.
First, utilizing (\ref{EIEkey}), (\ref{ddx:stream}) and the Young inequality, we have
\begin{align*}
|I_1| &\leq 
C\|\partial^2_x\psi^{\eps}\|_{L^2(0,t;L^2(\Omega_\eta))}\|\nabla\bv^{\eps}\|_{L^2(0,t;L^2(\Omega_\eta))}\\
&\leq C\eps^{3/2}\left (\int_0^t \left(\eps\|\partial^3_x\eta^{\eps}(s)\|^2_{L^2(\omega)} 
+ \eps^{3/2}\|\partial^3_x\eta^{\eps}(s)\|^{3/2}_{L^2(\omega)}\right) \dd s   \right )^{1/2}\\
&\leq C\eps^{3} + \kappa\eps\int_0^t \left(\|\partial^3_x\eta^{\eps}(s)\|^2_{L^2(\omega)} 
+ \eps^{1/2}\|\partial^3_x\eta^{\eps}(s)\|^{3/2}_{L^2(\omega)}\right) \dd s\\
& \leq C\eps^{3} 
+ \frac{7\eps}{4}\int_0^t\|\partial^3_x\eta^{\eps}(s)\|^2_{L^2(\omega)} \dd s\,.
\end{align*}

In order to estimate $I_2$, we split the integral into two parts. 
First, employing the Reynolds transport theorem, we estimate the term with the time derivative:
\begin{align*}
\left |\int_0^t\!\!\int_{\Omega_{\eta}(s)}\partial_t\bv^{\eps}\cdot\vphi^{\eps}\dd\bx\dd s\right |
&=\left |\frac{\dd}{\dd t}\left(\int_0^t\!\!\int_{\Omega_{\eta}(s)}\bv^{\eps}\cdot\vphi^{\eps}\dd\bx\dd s\right)
-\int_0^t\!\!\int_{\Omega_{\eta}(s)}\bv^{\eps}\cdot\partial_t\vphi^{\eps}\dd\bx\dd s\right.\\
&\quad \left. + \eps^{2}\int_0^t\!\!\int_\omega(\partial_t\eta^{\eps})^2\partial^2_x\eta^{\eps}\dd x\right | 
\leq \underbrace{\|\bv^{\eps}(t)\|_{L^2(\Omega_\eta(t))}\|\vphi^{\eps}(t)\|_{L^2(\Omega_\eta(t))}}_{I_{21}} \\
&\qquad + \underbrace{\left|\int_0^t\!\!\int_{\Omega_{\eta}(s)}\bv^{\eps}\cdot\partial_t\vphi^{\eps}\dd\bx\dd s\right |}_{I_{22}}
+ \eps^{2}\underbrace{\int_0^t\|\partial_t\eta^{\eps}\|^2_{L^{4}(\omega)}\|\partial^2_x\eta^{\eps}\|_{L^{2}(\omega)}\dd s}_{I_{23}}.
\end{align*}
In the following we estimate the obtained terms on the right hand side separately. 
Utilizing the energy estimate (\ref{EIEkey})
and the stream function estimates (\ref{dx:stream}) and (\ref{dy:stream}) we get
\begin{align*}
I_{21} &\leq \|\bv^{\eps}(t)\|_{L^2(\Omega_\eta(t))}\|\nabla\psi^{\eps}(t)\|_{L^2(\Omega_\eta(t))}\\
&\leq C\eps^{1/2}\left(\eps^{3/2} + \eps^{3/4}\left\|\frac{1}{\eta^{\eps}(t)}\right\|_{L^1(\omega)}^{1/4}\right)
\leq C\eps^{2} + C\eps^{5/3} + \frac14\left\|\frac{1}{\eta^{\eps}(t)}\right\|_{L^1(\omega)}.
\end{align*}
Using the definition $\vphi^{\eps}=\nabla^{\perp}\psi^{\eps}$ and integrating by parts we obtain
\begin{align*}
I_{22} &= \left|\int_0^t\!\!\int_{\Omega_{\eta}(s)}(v^{\eps}_2,-v^{\eps}_1)\cdot\nabla\partial_t\psi^{\eps}\dd\bx\dd s\right|\\
& = \left |\int_0^t\!\!\int_{\Omega_{\eta}(s)}(\partial_y v_1^{\eps}-\partial_xv_2^{\eps})\partial_t\psi^{\eps}\dd\bx\dd s 
+ \eps^{2}\int_0^t\!\!\int_\omega\partial_t\eta^{\eps}\partial_x\eta^{\eps}\partial_{tx}\eta^{\eps}\dd x  \right |\\
& \leq \|\nabla\bv^{\eps}\|_{L^2(0,t;L^2(\Omega_\eta))}\|\partial_t\psi^{\eps}\|_{L^2(0,t;L^2(\Omega_\eta))} 
+ \eps^{2}\int_0^t\|\partial_x\eta^{\eps}\|_{L^\infty(\omega)}\|\partial_t\eta^{\eps}\|_{L^{2}(\omega)}\|\partial_{tx}\eta^{\eps}\|_{L^2(\omega)}\dd s\\
&\leq C\eps^{3/2}\left(\eps^r + \eps^{-4}\int_0^t\|\partial^3_x\eta^{\eps}(s)\|_{L^2(\omega)}\dd s \right)^{1/2} \\
&\qquad + \eps^{2} \|\partial_x\eta^{\eps}\|_{L^\infty(0,t;L^\infty(\omega))}\|\partial_t\eta^{\eps}\|_{L^\infty(0,t;L^2(\omega))}\int_0^t\|\partial_{tx}\eta^{\eps}\|_{L^2(\omega)}\dd s\\
&\leq \frac{C}{\eps} + \eps^{r+4} + \int_0^t\|\partial^3_x\eta^{\eps}(s)\|_{L^2(\omega)}\dd s + C\eps^{(r+1)/2}\\
&\leq \frac{C}{\eps} +  C + \int_0^t\|\partial^3_x\eta^{\eps}(s)\|_{L^2(\omega)}^2\dd s\,
\end{align*}
for $\eps$ small enough.
Above we used the energy estimate (\ref{EIEkey}), the stream function estimate (\ref{dt:stream}) and the
Young inequality.
Combining the continuity of the Sobolev embedding $L^4(\omega)\hookrightarrow H^1(\omega)$ and the H\"older inequality, we find
\begin{align*}
I_{23}\leq C\int_0^t\|\partial_{tx}\eta^{\eps}\|^2_{L^2(\omega)}\|\partial^2_x\eta^{\eps}\|_{L^{2}(\omega)}\dd s
\leq C\|\partial^2_x\eta^{\eps}\|_{L^\infty(0,t;L^{2}(\omega))}\int_0^t\|\partial_{tx}\eta^{\eps}\|^2_{L^2(\omega)}\dd s
\leq C\eps^r\,.
\end{align*}

Next we estimate the convective term in $I_2$:
\begin{align*}
&\left|\int_0^t\!\!\int_{\Omega_{\eta}(s)}(\bv^{\eps}\cdot\nabla)\bv^{\eps}\cdot\vphi^{\eps}\dd\bx\dd s\right|\\
&\quad \leq\int_0^t\!\!\int_\omega \left (\int_0^{\eta^{\eps}(x,s)}|\bv^{\eps}|^2 \dd z\right)^{1/2}\left (\int_0^{\eta^{\eps}(x,s)}|\nabla \bv^{\eps}|^2\dd z \right)^{1/2}
\sup_z|\vphi^{\eps}|\,\dd x\dd s\\
&\quad \leq C\underbrace{\int_0^t\!\!\int_\omega \left (\int_0^{\eta^{\eps}(x,s)}|\bv^{\eps}|^2\dd z \right)^{1/2}\left (\int_0^{\eta^{\eps}(x,s)}|\nabla \bv^{\eps}|^2 \dd z\right)^{1/2}\left|\partial_x^2\eta^{\eps}(x,s)\right|\dd x\dd s}_{I_{24}}\\
&\qquad + C\underbrace{\int_0^t\!\!\int_\omega \left (\int_0^{\eta^{\eps}(x,s)}|\bv^{\eps}|^2 \dd z\right)^{1/2}\left (\int_0^{\eta^{\eps}(x,s)}|\nabla \bv^{\eps}|^2\dd z \right)^{1/2}\frac{\eps}{\eta^{\eps}(x,s)}\,\dd x\dd s}_{I_{25}},
\end{align*}
where we used the stream estimate (\ref{nabla:stream}).
Integral term $I_{24}$ is estimated as follows:
\begin{align*}
I_{24} &\leq \int_0^t\!\!\int_\omega\int_0^{\eta^{\eps}(x,s)}|\nabla\bv^{\eps}|^2 \dd z\dd x\dd s
+ \int_0^t\!\!\int_\omega\left(\int_0^{\eta^{\eps}(x,s)}|\bv^{\eps}|^2 \dd z\right)|\pa_x^2\eta^\eps|^2\dd x\dd s\\
& \leq C\eps^3 + \int_0^t \|\bv^{\eps}\|_{L^2(\Omega_{\eta}(s))}^2\|\pa_x^2\eta^\eps\|_{L^\infty(\omega)}^2\dd s
\leq C\eps^3 + C\|\bv^{\eps}\|_{L^\infty(0,t;L^2(\Omega_{\eta}(s)))}^2\int_0^t \|\pa_x^3\eta^\eps\|_{L^2(\omega)}^2\dd s\\
&\leq C\eps^3 + C\eps\int_0^t \|\pa_x^3\eta^\eps\|_{L^2(\omega)}^2\dd s\,,
\end{align*}
where we used the energy inequality (\ref{EIEkey}) and the continuity of the 1D Sobolev embedding
$L^{\infty}(\omega)\hookrightarrow H^1(\omega)$.
In order to estimate $I_{25}$, note that (cf.~\ref{2.ineq:eps_P})
\begin{equation*}
\int_0^{\eta^{\eps}(x,t)}|\bv^{\eps}|^2\dd z\leq \frac12\eta^{\eps}(x,t)^2\int_0^{\eta^{\eps}(x,t)}|\partial_z \bv^{\eps}|^2\dd z\,.
\end{equation*}
Employing the latter in $I_{25}$ we obtain
\begin{align*}
I_{25}\leq C\eps\|\nabla\bv^{\eps}\|_{L^2(0,t;L^2(\Omega_{\eta}(s)))}^2 \leq C\eps^4\,.
\end{align*}

Finally, we estimate the force term $I_3$:
\begin{align*}
|I_3| &\leq \|\bs f\|_{L^\infty(0,t;L^\infty(\Omega_{\eta}(s)))}\int_0^t\int_{\Omega_{\eta}(s)}|\vphi^{\eps}|\dd\bx\dd s
\leq C\left(\int_\omega \eta^{\eps}(x,t)\dd x\right)^{1/2}\|\nabla\psi^\eps\|_{L^\infty(0,t;L^2(\Omega_{\eta}(s)))}\\
&\leq C\eps^{1/2}\left(\eps^{3/2} + \eps^{3/4}\left\|\frac{1}{\eta^{\eps}}\right\|_{L^\infty(0,t;L^1(\omega))}^{1/4}\right)
\leq C\eps^{2} + \frac{\eps^{2}}{4}\left\|\frac{1}{\eta^{\eps}}\right\|_{L^\infty(0,t;L^1(\omega))}\,.
\end{align*}

Putting all together, for $\eps$ small enough, we obtain
\begin{equation*}
\left\|\frac{1}{\eta^{\eps}}\right\|_{L^\infty(0,T;L^1(\omega))}\leq \frac{C}{\eps}\,.
\end{equation*}
Combining the latter with the energy estimate \eqref{EIEkey} we get the uniform estimate
\begin{align}\label{DistanceL1H2}
\left\|\frac{\eps}{\eta^{\eps}}\right\|_{L^\infty(0,T;L^1(\omega))}
+\left\|\frac{\eta^{\eps}}{\eps} \right \|_{L^\infty(0,T;H^2_\#(\omega))}\leq C\,,
\end{align}
where the constant $C$ does not depend on $\eps$. 
Having at hand \eqref{DistanceL1H2}, we can invoke \cite[Proposition 7]{grandmont2016existence} 
to conclude that
\begin{align}\label{DistanceFinal}
\left\|\frac{\eps}{\eta^{\eps}}\right\|_{L^\infty(0,T;L^\infty(\omega))} \leq C\,.
\end{align}
This finishes the proof of Proposition \ref{prop:no_contact}.
\end{proof}

Due to nonlinearities which appear in the original model, 
in the course of the derivation of the reduced model, 
we will also need a strong convergence of the sequence of structure
displacements $(\eta^\eps)$. Therefore, 
a uniform estimate on the time derivative $(\pa_t\eta^\eps)$ will be in order 
(cf.~Proposition \ref{2:ineq:pateta}), but before that
we need uniform estimates on the fluid velocity.

\subsection{Uniform estimates for the fluid velocity}
In order to obtain uniform estimates for the fluid velocity, 
we need to re-write the system \eqref{1.eq:stokes}-\eqref{KinematicBC} on the fixed domain 
$\Omega\times (0,T)$, where $\Omega=\omega\times (0,1)\subset\R^2$. For that purpose 
we introduce the following change of spatial variables:
\begin{equation}\label{VarChange}
\left (
\begin{array}{c}
\hat x\\
\hat y
\end{array}
\right )=
\left (
\begin{array}{c}
x\\
\dfrac{z}{\eta^\eps(x,t)}
\end{array}
\right ),
\end{equation}
where new variables are denoted by hats.
The rescaled spatial gradient can be calculated as
\begin{equation}\label{RescaledNabla}
\nabla^{\eps}_{\eta}=
\left (
\begin{array}{c}
\partial_{\hat x}-\hat y\dfrac{\partial_{\hat x}\eta^{\eps}}{\eta^{\eps}}\partial_{\hat y}
\\[1.5em]
\dfrac{1}{\eta^{\eps}}\partial_{\hat y}
\end{array}
\right )
=\left (
\begin{array}{c}
\partial_{\hat x}-\hat y\dfrac{\partial_{\hat x}\hat{\eta^{\eps}}}{\hat{\eta^{\eps}}}\partial_{\hat y}
\\[1.5em]
\dfrac{1}{\eps\hat{\eta^{\eps}}}\partial_{\hat y}
\end{array}
\right ),
\end{equation}
where we, motivated by \eqref{DisplacementEstimate}, introduced
the rescaled displacement $\hat{\eta}^{\eps}(x, {t}) = \eps^{-1}\eta^{\eps}(x,t)$.
Jacobian of the spatial transformation (\ref{VarChange}) then equals $(\eps\hat{\eta^{\eps}})^{-1}$, 
while the Jacobian of the full space-time transformation (\ref{VarChange}) with the time scale (S2) equals 
$\eps(\hat{\eta^{\eps}})^{-1}$.

Writing down the fluid dissipation term from the energy dissipation inequality \eqref{EIEkey}
in transformed variables implies the following uniform estimate for the fluid velocity 
$\hat \bv^\eps(\hat x,\hat y, t) = \bv^\eps(x,z,t)$ on the fixed domain: for a.e.~$t\in(0,T)$
\begin{equation}\label{VelEstimate}
\int_0^{ t}\!\!\int_{\Omega}|\nabla_\eta^\eps\hat\bv^\eps|^2\hat\eta^\eps\,\dd \hat\bx\dd s
\leq C\eps^2\,.
\end{equation}
In particular, this implies
\begin{equation}\label{VelEstimate}
\int_0^{ t}\!\!\int_{\Omega}|\pa_y\hat\bv^\eps|^2\,\dd \hat\bx\dd s
\leq C\eps^4\,.
\end{equation}
Employing the Poincar\'e inequality (\ref{2.ineq:eps_P}) on the rescaled domain and 
estimate (\ref{DisplacementEstimate}) we find:
for a.e.~$t\in(0,T)$
\begin{equation*}
\int_0^{ t}\!\!\int_{\Omega}|\hat\bv^\eps|^2\hat\eta^\eps\,\dd \hat\bx\dd s
\leq 
C\eps^2\int_0^{ t}\!\!\int_{\Omega}|\nabla_\eta^\eps\hat\bv^\eps|^2\hat\eta^\eps\,\dd \hat\bx\dd s
\leq C\eps^4\,.
\end{equation*}
Therefore, rescaling the fluid velocity $\hat{\bv}^\eps$ according to
$\tilde\bv^{\eps} = \eps^{-2}\hat{\bv}^{\eps}$,
we obtain the following uniform estimates:
\begin{align}\label{3.est:payvel}
\|\pa_y\tilde\bv^{\eps}\|_{L^2(0,T;L^2(\Omega))} &\leq C\,,\\
\|\tilde\bv^{\eps}\sqrt{\hat\eta^{\eps}}\|_{L^2(0,T;L^2(\Omega))} &\leq C\,. \label{3.est:unif_vel}
\end{align}

\subsection{Uniform estimate for the pressure}
Let us write down the weak formulation of the
original FSI problem which also involves the pressure term and let us take all
scalings from above into account, but neglecting hats and tildas. 
Then for every test function $\vphi$ compactly supported in $\Omega\times(0,T)$ we have
\begin{align}\nonumber
\int_0^T\!\!\int_{\Omega} p^{\eps}(\nabla_{\eta}^{\eps}\cdot\vphi)\,\eta^{\eps}\dd\bx\dd t = 
-\eps^4\int_0^T\!\!\int_{\Omega}\bv^{\eps}\cdot\partial_{{t}}\vphi\,\eta^{\eps}\dd\bx\dd t
+ \eps^4\int_0^T\!\!\int_{\Omega}(\bv^{\eps}\cdot\nabla_\eta^{\eps})\bv^{\eps}\cdot\vphi \,\eta^{\eps}\dd\bx\dd t 
\\ \label{WeakRescaled_press}
 + 2\eps^2\int_0^T\!\!\int_{\Omega}\D_{\eta}^{\eps}(\bv^{\eps}):\D_{\eta}^{\eps}(\vphi)\,\eta^{\eps}\dd\bx\dd t
 - \int_0^T\!\!\int_{\Omega}\bs f^{\eps}\cdot\vphi \,\eta^{\eps}\dd\bx\dd t\,.
\end{align}
The structure terms vanish due to compact support of $\vphi$. 
We decompose the pressure functional as
\begin{align*}
P^\eps(\vphi)&:=
\int_0^T\!\!\int_{\Omega} p^{\eps}(\nabla_{\eta}^{\eps}\cdot\vphi)\,\eta^{\eps}\dd\bx\dd t =
P^{\eps}_{1}(\vphi) + P^{\eps}_{2}(\vphi)\,,
\end{align*} 
where
\begin{align*}
P^{\eps}_{1}(\vphi) &= -\eps^4\int_0^T\!\!\int_{\Omega}\bv^{\eps}\cdot\partial_{{t}}\vphi\,\eta^{\eps}\dd\bx\dd t\,,\\
P^{\eps}_{2}(\vphi) &= \eps^4\int_0^T\!\!\int_{\Omega}(\bv^{\eps}\cdot\nabla_\eta^{\eps})\bv^{\eps}\cdot\vphi \,\eta^{\eps}\dd\bx\dd t \\
 &\quad  + 2\eps^2\int_0^T\!\!\int_{\Omega}\D_{\eta}^{\eps}(\bv^{\eps}):\D_{\eta}^{\eps}(\vphi)\,\eta^{\eps}\dd\bx\dd t
 - \int_0^T\!\!\int_{\Omega}\bs f^{\eps}\cdot\vphi \,\eta^{\eps}\dd\bx\dd t\,.
\end{align*} 
Utilizing uniform estimates 
(\ref{DisplacementEstimate}), (\ref{3.est:no_contact}), (\ref{VelEstimate}), 
(\ref{3.est:unif_vel}) and assumption (S4) on the volume force, we obtain the following estimates:
\begin{align} \label{P1phi}
\left|P^{\eps}_{1}(\vphi)\right| 
&\leq 
\eps^4\|\eta^{\eps}\bv^{\eps}\|_{L^2(\Omega_T)}\|\partial_{{t}}\vphi\|_{L^2(\Omega_T)}
\leq C\eps^4 \|\vphi\|_{H^1_0(\Omega_T)}\,,
\\\nonumber
\left|P^{\eps}_{2}(\vphi)\right|&\leq 
\eps^4\left|\int_0^T\!\!\int_{\Omega}(\bv^{\eps}\cdot\nabla_\eta^{\eps})\bv^{\eps}\cdot\vphi \,\eta^{\eps}\dd\bx\dd t\right| 
+ 2\eps^2\|\eta^{\eps}\D_{\eta}^{\eps}(\bv^{\eps})\|_{L^2(\Omega_T)}\|\D_{\eta}^{\eps}(\vphi)\|_{L^2(\Omega_T)}\\
 & \qquad + \|\eta^{\eps}\bs f^{\eps}\|_{L^2(\Omega_T)}\|\vphi\|_{L^2(\Omega_T)}\label{P2phi}\\
&\leq C\|\vphi\|_{L^2(0,T;H_0^1(\Omega))}\,,\nonumber
\end{align}
where $\Omega_T\equiv \Omega\times(0,T)$ and $C>0$ is independent of $\eps$.
In this way we have proved the uniform boundedness of a sequence of functionals 
$(P^\eps)\subset H^{-1}(\Omega_T)$, 
\begin{equation}\label{3.est:unifpres}
\|P^\eps\|_{H^{-1}(\Omega_T)} \leq C\,.
\end{equation}
Moreover, we have proved the following uniform estimates: 
\begin{align}\label{P1}
\|P^{\eps}_{1}\|_{H^{-1}(\Omega_T)}\leq C\eps^4 \quad \text{and}\quad 
\|P^{\eps}_{2}\|_{L^2(0,T;H^{-1}(\Omega)}\leq C\,,
\end{align}
which (on a subsequence as $\eps\downarrow0$) imply 
\begin{align}
P^{\eps}_{1} &\to 0 \quad\text{strongly in } H^{-1}(\Omega_T)\,,\\
\label{P2}
P^{\eps}_{2}&\rightharpoonup P\quad \text{weakly in } L^2(0,T;H^{-1}(\Omega))\,.
\end{align}

\section{Derivation of the reduced model -- proof of Theorem \ref{tm:main}}\label{sec:RM}

In this section we derive the reduced model by letting $\eps\downarrow0$ and thus prove our main
result, Theorem \ref{tm:main}. The proof is devised into several steps. Based on the 
uniform estimates from the previous section we first identify weak (strong) limits, and then 
we identify relations between them. Finally, we interpret the obtained
reduced model as a weak formulation of the sixth-order thin-film type equation.  

\subsection{Weak (and strong) convergence results}

\begin{proposition}\label{2:ineq:pateta}
Let $(\eta^\eps)$ be a sequence of rescaled structure displacements, 
then there exists a constant $C>0$, independent of $\eps$, such that
\begin{equation}\label{dt:eta_eps}
\|\partial_t\eta^{\eps}\|_{L^2(0,T;H^{-1}_\#(\omega))} \leq C\,,
\end{equation}
where $H^{-1}_\#(\omega)$ denotes the dual of $H^{1}_\#(\omega)$. Moreover, the following estimate holds:
\begin{equation}\label{2:ineq:pateta1}
	\|\partial_t\eta^{\eps}\|_{L^2(0,T;H^{1}_\#(\omega))} \leq C\eps^{(r-3)/2}\,.
\end{equation}
\end{proposition}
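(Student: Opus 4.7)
The two estimates come from two different sources. The $H^1$ bound is a purely structural statement, read directly from the viscoelastic dissipation term in \eqref{EIEkey}, while the $H^{-1}$ bound reaches across the fluid-structure kinematic coupling to trade $\partial_t\eta^\eps$ for a horizontal derivative of a fluid quantity that is cheap in $\eps$.

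For the $H^1$ bound, the dissipation term in \eqref{EIEkey} gives $\updelta\eps^{2-r}\|\partial_x\partial_t\eta^\eps\|_{L^2(0,T;L^2(\omega))}^2\leq C\eps$, whence $\|\partial_x\partial_t\eta^\eps\|_{L^2(0,T;L^2(\omega))}\leq C\eps^{(r-1)/2}$. Volume conservation \eqref{VolumeCon} guarantees $\int_\omega\partial_t\eta^\eps(\cdot,t)\,\dd x=0$ for a.e.~$t\in(0,T)$, so the Poincar\'e inequality for $\omega$-periodic mean-zero functions upgrades this to an $L^2$ bound on $\partial_t\eta^\eps$ itself; a second application yields the full $H^1_\#$ norm. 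Combined with the $\eps^{-1}$ factor coming from the fact that $\eta^\eps$ here refers to the rescaled displacement (i.e.\ $\hat\eta^\eps$ in the notation of Section~\ref{sec:UE}), one lands precisely on the claimed rate $C\eps^{(r-3)/2}$.

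For the $H^{-1}$ bound, I integrate \eqref{1.eq:divfree} in $z$ over $(0,\eta^\eps(x,t))$ and use $\bv^\eps|_{z=0}=0$, the time-rescaled kinematic coupling $\bv^\eps(x,\eta^\eps,t)=(0,\eps^2\partial_t\eta^\eps)$, and Leibniz's rule (whose boundary contribution vanishes thanks to $v_1^\eps(x,\eta^\eps,t)=0$) to derive the pointwise identity
\[
\eps^2\,\partial_t\eta^\eps(x,t)=-\partial_x\int_0^{\eta^\eps(x,t)} v_1^\eps(x,z,t)\,\dd z.
\]
Pairing against an arbitrary $\psi\in H^1_\#(\omega)$, integrating by parts in $x$ (boundary terms disappear by periodicity), and applying Cauchy-Schwarz together with the Poincar\'e inequality \eqref{2.ineq:eps_P} for $v_1^\eps$ gives
\[
\eps^2\Bigl|\int_\omega \partial_t\eta^\eps\,\psi\,\dd x\Bigr|\leq C\|\partial_x\psi\|_{L^2(\omega)}\,\|\eta^\eps(t)\|_{L^\infty(\omega)}^{3/2}\,\|\nabla\bv^\eps(t)\|_{L^2(\Omega_{\eta^\eps}(t))}.
\]
Squaring, integrating in time, and plugging in $\|\eta^\eps\|_{L^\infty(0,T;L^\infty(\omega))}\leq C\eps$ from \eqref{DisplacementEstimate} (via the Sobolev embedding $H^2\hookrightarrow L^\infty$) together with $\|\nabla\bv^\eps\|_{L^2(0,T;L^2)}^2\leq C\eps^3$ from \eqref{EIEkey} yields the bound \eqref{dt:eta_eps}.

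The delicate piece is setting up the identity in the second step: one must simultaneously use both components of the kinematic coupling ($v_2^\eps=\eps^2\partial_t\eta^\eps$ and $v_1^\eps=0$ at $z=\eta^\eps$) in order to cancel the moving-boundary contribution from Leibniz's rule, and then track powers of $\eps$ carefully so that the $\eps^{-2}$ prefactor from inverting the coupling is absorbed by the $\eps^{3/2}$ coming from $\|\eta^\eps\|_{L^\infty}^{3/2}$ and the further $\eps^{3/2}$ coming from $\|\nabla\bv^\eps\|_{L^2(L^2)}$. Step~1 is otherwise entirely routine.
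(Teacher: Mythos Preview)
Your proof is correct and rests on the same two mechanisms as the paper: the $H^1_\#$ bound is read off the viscoelastic dissipation term in \eqref{EIEkey} together with Poincar\'e for mean-zero periodic functions (via \eqref{VolumeCon}), and the $H^{-1}_\#$ bound comes from combining the incompressibility constraint with the kinematic coupling so that $\partial_t\eta^\eps$ is an $x$-derivative of the horizontal flux, which is then thrown onto the test function.

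The one substantive difference is where the second computation takes place. The paper first passes to the fixed reference domain via \eqref{VarChange} and works with the transformed divergence operator $\nabla^\eps_\eta$ and the rescaled-velocity estimates \eqref{3.est:payvel}--\eqref{3.est:unif_vel}, citing also the no-contact bound \eqref{3.est:no_contact}. You stay on the moving physical domain $\Omega_{\eta^\eps}(t)$, obtain the flux identity by a direct Leibniz-rule argument (using both components of the kinematic condition to kill the moving-boundary term), and close with \eqref{2.ineq:eps_P} and the raw dissipation bound from \eqref{EIEkey}. Your route is a bit more elementary---it avoids the change-of-variables bookkeeping and does not invoke \eqref{3.est:no_contact}---while the paper's reference-domain formulation is the one needed anyway for the limit identification in Section~\ref{sec:RM}, so in context there is no extra cost to reusing it here.
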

\proof
The second inequality \eqref{2:ineq:pateta1} follows  directly the energy inequality (\ref{EIEkey}). However, notice that for $r<3$ it is not uniform in $\eps$ and therefore we need to prove unifrom estimate in weaker norm to obtain convergence of $\eta^{\eps}$ via Aubi-Lions lemma.

From the divergence free condition $\nabla^{\eps}_\eta\cdot \bv^{\eps}=0$ on $\Omega\times(0,T)$ 
we obtain
\begin{align*}
\partial_y{v}_2^{\eps}
= - \eps{\eta}^{\eps}\left (\partial_{x} -  y\dfrac{\partial_{ x}{\eta^{\eps}}}{{\eta^{\eps}}}\partial_{y} \right ){v}_1^{\eps}\,,
\end{align*}
which yields 
\begin{align*}
\left. v_2^{\eps}\right|_{\omega\times\{1\}} = 
- \eps{\eta}^{\eps}\int_0^1\left (\partial_{x} -  y\dfrac{\partial_{ x}{\eta^{\eps}}}{{\eta^{\eps}}}\partial_{y} \right ){v}_1^{\eps}\dd y\,.
\end{align*}
On the other hand, rescaling of the kinematic condition gives us
$\partial_{t}{\eta}^{\eps} = \eps^{-1}\left. v_2^{\eps}\right|_{\omega\times\{1\}}$, which provides
\begin{align*}
\int_0^T\!\!\int_\omega\partial_t\eta^{\eps}\varphi\, \dd x\dd t = 
\int_0^T\!\!\int_\omega {\eta}^{\eps}\!\int_0^1\left (\partial_{x} -  y\dfrac{\partial_{ x}{\eta^{\eps}}}{{\eta^{\eps}}}\partial_{y} \right ){v}_1^{\eps}\dd y\, \varphi \,\dd x\dd t
\end{align*} 
for every $\varphi\in L^2(0,T;H^1_\#(\omega))$. Integrating by parts in the latter identity 
and using uniform estimates (\ref{DisplacementEstimate}), (\ref{3.est:no_contact}) 
and (\ref{VelEstimate}) we find
\begin{equation*}
\left|\int_0^T\!\!\int_\omega\partial_t\eta^{\eps}\varphi\, \dd x\dd t\right|\leq C\|\varphi\|_{L^2(0,T;H^1_\#(\omega))}\,,
\end{equation*}
which implies (\ref{dt:eta_eps}).
\qed

Recall than the rescaled displacement is defined by
\begin{equation}\label{DispReScaled}
\hat{\eta}^{\eps}(x, {t}) = \eps^{-1}\eta^{\eps}(x,t)\,,
\end{equation}
where $\eta^{\eps}$ is the strong solution provided by Theorem \ref{ExistenceGlobal}.
Neglecting hats in further, uniform estimate \eqref{DisplacementEstimate} then gives
$\|\eta^{\eps}\|_{L^{\infty}(0,T;H^2_\#(\omega))}\leq C\,,$
which implies
\begin{equation}
\eta^{\eps} \overset{\ast}{\rightharpoonup} h\quad\text{weakly$^*$ in }L^{\infty}(0,T;H^2_\#(\omega))\,
\end{equation}
on a subsequence as $\eps\downarrow0$, and the uniform estimate (\ref{dt:eta_eps}) yields
\begin{equation}\label{pateta_convergence}
\pa_t\eta^{\eps} \rightharpoonup \pa_th\quad\text{weakly in }L^{2}(0,T;H^{-1}_\#(\omega))\,.
\end{equation}
Furthermore, due to compact and continuous embeddings 
$H^2_\#(\omega)\hookrightarrow\hookrightarrow 
C^1_\#(\overline\omega)\hookrightarrow H^{-1}_\#(\omega)$, respectively, 
invoking the Aubin-Lions lemma we conclude the strong convergence result
\begin{equation}\label{strong_conv}
\eta^{\eps}\to h\quad \text{strongly in }C^{0}([0,T];C^1_\#(\overline\omega))\,,
\end{equation}
which will be essential in the subsequent analysis. Moreover, the uniform no-contact results
of Proposition \ref{prop:no_contact} gives that $h(x,t)\geq c > 0$ for all $(x,t)\in\omega\times(0,T)$,
which in addition implies
\begin{equation}\label{strong_conv-1}
(\eta^{\eps})^{-1}\to h^{-1}\quad \text{strongly in }C^{0}([0,T];C^1_\#(\overline\omega))\,.
\end{equation}

Uniform estimate on the fluid velocity (\ref{3.est:unif_vel}) provides
\begin{equation*}
\tilde\bv^{\eps}\sqrt{\eta^{\eps}}\rightharpoonup \bu\quad\text{weakly in }L^2(0,T;L^2(\Omega))
\end{equation*}
on a subsequence as $\eps\downarrow0$. Due to the strong convergence results (\ref{strong_conv}) 
and (\ref{strong_conv-1}) we conclude that the rescaled fluid velocity $\tilde\bv^{\eps}$ itself 
has the weak limit (on a subsequence as $\eps\downarrow0$), i.e.~neglecting tildas we have
\begin{equation}\label{weakvel}
\bv^{\eps}\rightharpoonup \bu h^{-1/2}=:\bv \quad\text{weakly in }L^2(0,T;L^2(\Omega))\,.
\end{equation}
Furthermore, uniform estimate (\ref{3.est:payvel}) implies
\begin{equation}\label{weakpayvel}
\pa_y\bv^{\eps}\rightharpoonup \bs g\quad\text{weakly in }L^2(0,T;L^2(\Omega))
\end{equation}
and because of uniqueness of the weak limit we have $\bs g = \pa_y \bv$.

Let us now carefully analyze the pressure functional.
Employing the test function of the form $\bs\varphi = (0,\varphi_2)$ 
and using estimate (\ref{3.est:unifpres}), we calculate
\begin{equation}\label{press_phi2}
|P^{\eps}(0,\varphi_2)|
=\left|\frac{1}{\eps}\int_0^T\!\!\int_{\Omega}p^{\eps}\partial_y\varphi_2\, \dd\bx\dd t\right|
\leq C\|\varphi_2\|_{H_0^{1}(\Omega_T)}.
\end{equation}
Therefore, we proved that
\begin{equation}\label{payp}
\partial_y p^{\eps}\to 0\quad\text{strongly in }H^{-1}(\Omega_T)\,.
\end{equation}
  
Utilizing an arbitrary test function of the form $\bs\varphi = (\varphi_1,0)$ and integrating 
by parts we obtain
\begin{equation*}
P^{\eps}(\varphi_1,0)=
-\int_0^T\!\!\int_{\Omega}\eta^{\eps}\partial_xp^{\eps}\varphi_1\, \dd \bx\dd t
+\int_0^T\!\!\int_{\Omega}\pa_yp^{\eps}y\partial_x\eta^{\eps}\varphi_1\, \dd \bx\dd t\,.
\end{equation*}
Therefore, using the decomposition of $P^{\eps}$ we can write
\begin{equation}\label{etapaxp}
\int_0^T\!\!\int_{\Omega}\eta^{\eps}\partial_xp^{\eps}\varphi_1\, \dd \bx\dd t
=\int_0^T\!\!\int_{\Omega}\pa_yp^{\eps}y\partial_x\eta^{\eps}\varphi_1\, \dd \bx\dd t
 - P^{\eps}_{1}(\varphi_1,0) - P^{\eps}_{2}(\varphi_1,0)\,.
\end{equation}
Now, for an arbitrary $\phi\in H^{1}_0(0,T;H^{2}_0(\Omega))$, defining $\varphi_1 = \phi/\eta^{\eps}$, the latter 
identity reads
\begin{equation}\label{eq:paxp}
\int_0^T\!\!\int_{\Omega}\partial_xp^{\eps}\phi\, \dd \bx\dd t
=\int_0^T\!\!\int_{\Omega}\pa_yp^{\eps}\frac{y\partial_x\eta^{\eps}}{\eta^{\eps}}\phi\, \dd \bx\dd t
 - P^{\eps}_{1}(\frac{\phi}{\eta^{\eps}},0) - P^{\eps}_{2}(\frac{\phi}{\eta^{\eps}},0)\,.
\end{equation}
Let us estimate terms on the right hand side.
Integrating by parts with respect to $y$ and using identity (\ref{press_phi2}) we find
\begin{align*}
\int_0^T\!\!\int_{\Omega}\pa_yp^{\eps}\frac{y\partial_x\eta^{\eps}}{\eta^{\eps}}\phi\, \dd \bx\dd t
= - \int_0^T\!\!\int_{\Omega}p^{\eps}\pa_y\left(\frac{y\partial_x\eta^{\eps}}{\eta^{\eps}}\phi\right)\, \dd \bx\dd t
= -\eps P^{\eps}\left(0, \frac{y\partial_x\eta^{\eps}}{\eta^{\eps}}\phi \right)\,.
\end{align*}
Using estimates (\ref{P1phi}) and (\ref{P2phi}) we obtain
\begin{align*}
\left|\int_0^T\!\!\int_{\Omega}\pa_yp^{\eps}\frac{y\partial_x\eta^{\eps}}{\eta^{\eps}}\phi\, \dd \bx\dd t\right|
&\leq \eps \left|P^{\eps}_1\left(0, \frac{y\partial_x\eta^{\eps}}{\eta^{\eps}}\phi \right)\right|
+ \eps \left|P^{\eps}_2\left(0, \frac{y\partial_x\eta^{\eps}}{\eta^{\eps}}\phi \right)\right|\\
&\leq C\eps^5\left\|\pa_t\left(\frac{\partial_x\eta^{\eps}}{\eta^{\eps}}\phi\right)\right\|_{L^2(\Omega_T)}
+ C\eps \left\|\frac{\partial_x\eta^{\eps}}{\eta^{\eps}}\phi\right\|_{L^2(0,T;H_0^1(\Omega))}\\
&\leq C\eps^5\|\pa_{tx}\eta^{\eps}\|_{L^2(0,T;L^2(\omega))}\|(\eta^{\eps})^{-1}\|_{L^\infty(0,T;L^\infty(\omega))}\|\phi\|_{L^\infty(\Omega_T)} \\
&\quad + C\eps^5\left\|\frac{\partial_x\eta^{\eps}}{(\eta^{\eps})^2}\right\|_{L^\infty(0,T;L^\infty(\omega))}\|\pa_t\eta^{\eps}\|_{L^\infty(0,T;L^2(\omega))}\|\phi\|_{L^2(0,T;L^\infty(\Omega))} \\
&\quad + C\eps^5\left\|\frac{\partial_x\eta^{\eps}}{\eta^{\eps}}\right\|_{L^\infty(0,T;L^\infty(\omega))}\|\pa_t\phi\|_{L^2(\Omega_T)}\\
&\quad + C\eps \left\|\frac{\partial_x\eta^{\eps}}{\eta^{\eps}}\right\|_{L^\infty(0,T;L^\infty(\omega))}\|\phi\|_{L^2(0,T;H_0^1(\Omega))}\\
&\leq C\eps\left(\eps^{(r+5)/2} + \eps + \eps^4 + 1\right)\|\phi\|_{H^{1}_0(0,T;H^{2}_0(\Omega))}\,.
\end{align*}
Therefore,
\begin{equation}\label{paxp:1}
\pa_yp^{\eps}\frac{y\partial_x\eta^{\eps}}{\eta^{\eps}} \to 0\quad\text{strongly in } H^{-1}(0,T;H^{-2}(\Omega))\,.
\end{equation}
Above we also used uniform estimates (\ref{DisplacementEstimate}), (\ref{3.est:no_contact}) and \eqref{2:ineq:pateta1}. 
Next,
\begin{align*}
\left|P^{\eps}_{1}(\frac{\phi}{\eta^{\eps}},0)\right| &\leq C\eps^4\left\|\pa_t\left(\frac{\phi}{\eta^\eps}\right)\right\|_{L^2(\Omega_T)}\\
&\leq C\eps^4\|(\eta^{\eps})^{-1}\|_{L^\infty(0,T;L^\infty(\omega))}\|\pa_t\phi\|_{L^2(\Omega_T)}\\
&\quad + C\eps^4\|(\eta^{\eps})^{-2}\|_{L^\infty(0,T;L^\infty(\omega))}\|\pa_t \eta^{\eps}\|_{L^2(0,T;L^\infty(\omega))}\|\phi\|_{L^\infty(0,T;L^2(\Omega))}\\
&\leq C\eps\left(\eps^{(r+3)/2} + \eps^3\right)\|\phi\|_{H^{1}_0(\Omega_T)}\,,
\end{align*}
which implies 
\begin{equation}\label{paxp:2}
P^{\eps}_{1}(\frac{\cdot}{\eta^{\eps}},0) \to 0\quad\text{strongly in } H^{-1}(0,T;H^{-1}(\Omega))\,.
\end{equation}
Finally,
\begin{align*}
\left|P^{\eps}_{2}(\frac{\phi}{\eta^{\eps}},0)\right| &\leq C\left\|\frac{\phi}{\eta^\eps}\right\|_{L^2(0,T;H_0^1(\Omega))}\leq C\left\|\phi\right\|_{L^2(0,T;H_0^1(\Omega))}\,.
\end{align*}
The latter implies 
\begin{equation}\label{paxp:3}
P^{\eps}_{2}(\frac{\cdot}{\eta^{\eps}},0) \rightharpoonup q\quad\text{weakly in } L^{2}(0,T;H^{-1}(\Omega))\,
\end{equation}
for some $q\in L^{2}(0,T;H^{-1}(\Omega))$.
Putting together (\ref{paxp:1})-(\ref{paxp:3}) we have
\begin{align}\label{paxtoq}
\pa_xp^{\eps} \rightharpoonup -\, q \quad\text{weakly in } H^{-1}(0,T;H^{-2}(\Omega))\,.
\end{align}
Note from (\ref{eq:paxp}) that for an arbitrary test function 
$\xi\in H^{1}_0(0,T;H^{2}_0(\Omega))$ satisfying $\int_\Omega\xi \dd\bx = 0$, if we define
$\phi = \int_0^x\xi$, then
\begin{equation}\label{eq:p0val}
\int_0^T\!\!\int_{\Omega}p^{\eps}\xi\, \dd \bx\dd t 
= - \int_0^T\!\!\int_{\Omega}\pa_yp^{\eps}\frac{y\partial_x\eta^{\eps}}{\eta^{\eps}}\phi\, \dd \bx\dd t
 + P^{\eps}_{1}(\frac{\phi}{\eta^{\eps}},0) + P^{\eps}_{2}(\frac{\phi}{\eta^{\eps}},0)\,.
\end{equation}
Now repeating the estimates as above we conclude that there exists $p\in L^{2}(0,T;H^{-1}(\Omega))$
such that
\begin{align*}
p^{\eps} - \pi^{\eps}  \rightharpoonup p \quad\text{weakly in } H^{-1}(0,T;H^{-2}(\Omega))\,,
\end{align*}
where
$\pi^{\eps}(t) = \int_\Omega p^{\eps}\dd\bx$. 

Let us now estimate the mean value of the pressure $\pi^{\eps}$. From the weak 
formulation of the rescaled FSI problem (cf.~(\ref{WeakRescaled}) below) we have
\begin{align}\label{mvpress}
\int_0^T\!\!\int_{\Omega}p^{\eps}\pa_y \varphi_2\, \dd \bx\dd t &= \eps P^{\eps}(0,\varphi_2) 
-\uvro\eps^6\int_0^T\!\!\int_\omega\partial_{t}\eta^{\eps}\partial_{t}\psi\,\dd x\dd t\\
&\quad -\updelta\eps^{3-r}\int_0^T\!\!\int_\omega\partial_{t}\eta^{\eps}\partial_{x}^2\psi\nonumber
\,\dd x\dd t
+\upbeta\int_0^T\!\!\int_0^L\partial_{x}^2\eta^{\eps}\partial_{x}^2\psi
\,\dd x\dd t\,.
\end{align}
For an arbitrary $\zeta\in H_0^1(0,T)$, 
taking $\varphi_2 = y\zeta$ and $\psi = \zeta$ in (\ref{mvpress}) we obtain 
\begin{align*}
\int_0^T\!\!\pi^{\eps}\zeta\,\dd t &= \eps P^{\eps}(0,y\zeta) 
-\uvro\eps^6\int_0^T\!\!\int_\omega\partial_{t}\eta^{\eps}\partial_{t}\zeta\,\dd x\dd t
\end{align*}
Repeating the above estimates once again we find that
\begin{align*}
\pi^{\eps}  \to 0 \quad\text{strongly in } H^{-1}(0,T)\,.
\end{align*}
Therefore, we have
\begin{align*}
p^{\eps}  \rightharpoonup p \quad\text{weakly in } H^{-1}(0,T;H^{-2}(\Omega))\,
\end{align*}
on a subsequence as $\eps\downarrow0$.
From (\ref{paxp:3}), (\ref{paxtoq}) and (\ref{eq:p0val}) we
readily conclude that $\pa_x p = -q$. Hence, having $p\in L^{2}(0,T;H^{-1}(\Omega))$ and 
$\nabla p\in L^{2}(0,T;H^{-1}(\Omega))$ (recall (\ref{payp})), the Lions-Ne\v cas lemma yields
$p\in L^{2}(0,T;L^{2}(\Omega))$.

Going back to (\ref{etapaxp}) and estimating terms on the right hand side like above we obtain
\begin{equation*}
\eta^{\eps}\pa_xp^{\eps} \rightharpoonup - P \quad\text{weakly in } H^{-1}(0,T;H^{-2}(\Omega))\,,
\end{equation*}
where $P$ is defined in (\ref{P2}). On the other hand, the above convergence results allow us 
to conclude that
\begin{equation*}
\eta^{\eps}\pa_xp^{\eps} \rightharpoonup h\pa_xp \quad\text{weakly in } H^{-1}(0,T;H^{-2}(\Omega))\,
\end{equation*}
with $h\pa_xp\in L^{2}(0,T;H^{-1}(\Omega))$. Thus, 
\begin{equation}
P = -h\pa_xp\,.\label{Phpaxp}
\end{equation}

\subsection{Identification of the reduced model}
Recalling that we work with transformed variables and rescaled unknowns, 
let us write down the weak formulation of the
original FSI problem (\ref{1.eq:stokes})-(\ref{KinematicBC}), which also involves the pressure term:
\begin{align}\nonumber
-\eps^3\int_0^T\!\!\int_{\Omega}\bv^{\eps}\cdot\partial_{{t}}\vphi\,\eta^{\eps}\dd\bx\dd t
+ \eps^3\int_0^T\!\!\int_{\Omega}(\bv^{\eps}\cdot\nabla_\eta^{\eps})\bv^{\eps}\cdot\vphi \,\eta^{\eps}\dd\bx\dd t
\\
\label{WeakRescaled}
+2\eps\int_0^T\!\!\int_{\Omega}\D_{\eta}^{\eps}(\bv^{\eps}):\D_{\eta}^{\eps}(\vphi)\,\eta^{\eps}\dd\bx\dd t
-\frac{1}{\eps}\int_0^T\!\!\int_{\Omega} p^{\eps}(\nabla_{\eta}^{\eps}\cdot\vphi)\,\eta^{\eps}\dd\bx\dd t
\\\nonumber
-\uvro\eps^4\int_0^T\!\!\int_\omega\partial_{t}\eta^{\eps}\partial_{t}\psi\,\dd x\dd t
-\updelta\eps^{1-r}\int_0^T\!\!\int_\omega\partial_{t}\eta^{\eps}\partial_{x}^2\psi
\,\dd x\dd t
+\frac{\upbeta}{\eps^2}\int_0^T\!\!\int_0^L\partial_{x}^2\eta^{\eps}\partial_{x}^2\psi
\,\dd x\dd t
\\\nonumber
=\frac{1}{\eps}\int_0^T\!\!\int_{\Omega}\bs f^{\eps}\cdot\vphi \,\eta^{\eps}\dd\bx\dd t
\end{align}
where the test functions $(\vphi,\psi)\in C^1_c([0,T);\mathcal{V}_F(t)\times H^2_{\#}({\omega}))$ satisfy $\left.\vphi\right|_{\omega\times\{1\}}=(0,\psi)$.

Notice that the highest order terms in \eqref{WeakRescaled} are of order $\eps^{-2}$.
Thus, multiplying \eqref{WeakRescaled} by $\eps^2$ and taking test functions of the form 
$(\vphi,0)$, where $\vphi$ has compact support in $\Omega\times (0,T)$, in the limit (on a 
subsequence as $\eps\downarrow0$) we obtain only the pressure term
\begin{equation}\label{limit:p_y}
\int_0^T\!\!\int_{\Omega}p\,\partial_{y}\varphi_2\,\dd\bx\dd t = 0\,.
\end{equation}
Limit equation (\ref{limit:p_y}) implies that the pressure $p$ is
independent of the vertical variable $y$.
Multiplying again \eqref{WeakRescaled} by $\eps^2$ and taking arbitrary test function 
$(\vphi,\psi)$ compactly supported in $(0,T)$
such that $\left.\vphi\right|_{\omega\times\{1\}}=(0,\psi)$, in the limit (on a 
subsequence as $\eps\downarrow0$) we obtain
\begin{align}\label{p_integral}
\chi_{\{r=3\}}\updelta\int_0^T\!\!\int_\omega h\partial_{{t}}\partial_{x}^2\psi \,\dd x\dd t 
+ \upbeta\int_0^T\!\!\int_\omega\partial_{x}^2 h\partial_{x}^2\psi \,\dd x\dd t
 = \int_0^T\!\!\int_\omega p\psi\,\dd x\dd t\,,
\end{align}
where $\chi_{\{r=3\}} = 1$ if $r=3$, and $0$ otherwise. 
More precisely, the integral on the right hand side
comes as follows:
\begin{align*}
\int_0^T\!\!\int_\Omega p\,\pa_y\varphi_2\, \dd \bx\dd t 
= \int_0^T\!\!\int_\omega p\int_0^1\pa_y\varphi_2\, \dd y\, \dd x\dd t 
= \int_0^T\!\!\int_\omega p\psi\,\dd x\dd t\,,
\end{align*}
where we used that $p$ is independent of $y$ and $\psi(x,t) = \int_0^1\pa_y\varphi_2(x,y,t)\dd y$.
We can write equation (\ref{p_integral}) formally in the sense of distributions as
\begin{align}\label{PresDis}
-\chi_{\{r=3\}}\updelta\partial_{t}\partial^2_{x}h + \upbeta\partial^4_{x}h = p\,,
\end{align}
which tells us that in the lubrication approximation regime (on the limit as $\eps\downarrow0$)
the pressure is balanced by the structure bending and viscosity of the structure if 
the latter is large enough.

Next, we derive the equation for the limit velocity $\bv$. 
The divergence free equation on the reference domain reads
\begin{equation}\label{eq:divfree_ref}
\pa_x v_1^\eps - y\frac{\pa_x\eta^\eps}{\eta^\eps}\pa_y v_1^\eps 
+ \frac{1}{\eps\eta^\eps}\pa_yv_2^\eps = 0\,.
\end{equation}
Testing (\ref{eq:divfree_ref}) by $\eps\varphi$, where $\varphi$ is a test function compactly supported
in $\Omega$, and integrating by parts we obtain 
\begin{equation}\label{eq:divfree_ref_int}
-\eps\int_0^T\!\!\int_\Omega v_1^\eps\pa_x\varphi\,\dd\bx\dd t - 
\eps\int_0^T\!\!\int_\Omega y\frac{\pa_x\eta^\eps}{\eta^\eps} \pa_y v_1^\eps  \varphi\,\dd\bx\dd t
+ \int_0^T\!\!\int_\Omega\frac{1}{\eta^\eps}\pa_yv_2^\eps\,\varphi\,\dd\bx\dd t = 0\,.
\end{equation}
Invoking convergence results (\ref{strong_conv})-(\ref{weakpayvel}) we can pass to the limit 
in (\ref{eq:divfree_ref_int}) (on a subsequence as $\eps\downarrow0$) and thus obtain
$\int_0^T\!\!\int_\Omega h^{-1}\pa_yv_2\,\varphi\,\dd\bx\dd t = 0$, from which we
conclude that $h^{-1}\pa_yv_2 = 0$. Since $v_2 \in L^2(0,T;L^2(\Omega))$ and 
$\pa_y v_2 \in L^2(0,T;L^2(\Omega))$, function $v_2$ has the trace on $\omega$, and continuity
of the trace operator implies that $v_2 = 0$ on $\omega$. Furthermore, from the basic identity
$\displaystyle v_2(\cdot,y,\cdot) = \int_0^y\pa_y v_2 (\cdot,\zeta,\cdot) \dd \zeta$ we eventually conclude
that $v_2 = 0$.

Multiplying \eqref{WeakRescaled} by $\eps$ and taking $((\varphi_1,0),0)$ as a test function, where 
$\varphi_1$ has compact support in $\Omega$, we obtain (on the limit as $\eps\downarrow0$)
\begin{equation}\label{Lubrication}
\int_0^T\!\!\int_{\Omega}\frac{1}{h}\partial_{y}v_1\partial_{y}\varphi_1 \,\dd\bx\dd t 
-\int_0^T\!\!\int_{\Omega}\left(p h\, \partial_{x} \varphi_1 + p\pa_x h\,\varphi_1\right)\dd\bx\dd t 
  = \int_0^T\!\!\int_{\Omega}hf_1\varphi_1 \,\dd\bx\dd t \,.
\end{equation}
Integrating by parts, we obtain the following identity in the sense of distributions
\begin{equation}\label{Lubrication2}
-\int_0^T\!\!\int_{\Omega}\frac{1}{h}\partial_{y}^2v_1\varphi_1 \,\dd\bx\dd t 
+\int_0^T\!\!\int_{\Omega}h\pa_x p\, \varphi_1 \dd\bx\dd t 
  = \int_0^T\!\!\int_{\Omega}hf_1\varphi_1 \,\dd\bx\dd t \,,
\end{equation}
which can be written in the sense of equation in $L^2(0,T;H^{-1}(\Omega))$ as
\begin{equation}\label{FluidLimitEq}
\partial_y^2 v_1 = h^2\left(\partial_x p - f_1\right).
\end{equation}
Since the pressure $p$ and the displacement $h$ on the right hand side of \eqref{FluidLimitEq} are 
independent of $y$, the distributional equation can be solved explicitly in terms of $y$.
Again since $v_1 \in L^2(0,T;L^2(\Omega))$ and 
$\pa_y v_1 \in L^2(0,T;L^2(\Omega))$, function $v_1$ has the trace on $\omega$ and $\omega\times\{1\}$,
and continuity of the trace operator implies that $v_1 = 0$ on $\omega$ and and on $\omega\times\{1\}$.
Thus, $v_1$ inherits the no-slip boundary conditions from $v_1^\eps$, and
the explicit solution of (\ref{FluidLimitEq}) is given by
\begin{equation}\label{LimitVelFormula}
v_1(\cdot,y,\cdot)=\frac{1}{2}y(y - 1)h^2\partial_x p + h^2F(\cdot,y,\cdot)\,,
\end{equation}
where $\displaystyle F(\cdot,y,\cdot) 
= (y-1)\int_0^1\zeta f_1(\cdot,\zeta,\cdot)\dd\zeta 
- \int_y^1(y-\zeta)f_1(\cdot,\zeta,\cdot)\dd\zeta$.

Testing the divergence free equation (\ref{eq:divfree_ref}) 
with a test function $\varphi$ depending only on $x$, integrating by parts and 
employing the rescaled kinematic condition
$\left. v_2^{\eps}\right|_{\omega\times\{1\}} = \eps\partial_{t}{\eta}^{\eps}$ we obtain
\begin{align*}
0 = \int_0^T\!\!\int_\Omega\left(\pa_x v_1^\eps - y\frac{\pa_x\eta^\eps}{\eta^\eps}\pa_y v_1^\eps \right.
&+ \left.\frac{1}{\eps\eta^\eps}\pa_yv_2^\eps\right)\varphi\,\dd \bx\dd t\\
= & - \int_0^T\!\!\int_\Omega\left(v_1^\eps\pa_x\varphi 
- \frac{\pa_x\eta^\eps}{\eta^\eps} v_1^\eps\varphi \right)\dd \bx\dd t
+\int_0^T\!\!\int_\omega\frac{\partial_{t}\eta^{\eps}}{\eta^{\eps}}\varphi\,\dd x\dd t\,.
\end{align*}
Employing convergence results (\ref{pateta_convergence})-(\ref{weakvel}), the latter in the limit  
as $\eps\downarrow0$ becomes
\begin{equation*}
- \int_0^T\!\!\int_\Omega\left(v_1\pa_x\varphi 
- \frac{\pa_x h}{h} v_1\varphi \right)\dd \bx\dd t
+\int_0^T\!\!\int_\omega\frac{\partial_{t}h}{h}\varphi\,\dd x\dd t = 0\,,
\end{equation*}
where the second integral should be understood in the sense of distributions.
Since $\varphi$ is independent of $y$, the first integral can be written as
\begin{equation}\label{lubrik2}
- \int_0^T\!\!\int_\omega\left(\int_0^1v_1\dd y\,\pa_x\varphi 
- \frac{\pa_x h}{h} \int_0^1v_1 \dd y\,\varphi \right)\dd x\dd t
+\int_0^T\!\!\int_\omega\frac{\partial_{t}h}{h}\varphi\,\dd x\dd t = 0\,,
\end{equation}
and according to (\ref{LimitVelFormula}) we can calculate
\begin{align}\label{limitvelint}
\int_0^1 v_1 \dd y = -\frac{1}{12}h^2\pa_x p + h^2\Phi\,,
\end{align}
where 
$\displaystyle \Phi(x,t) = \int_0^1 F(x,y,t)\dd y$.
Moreover, (\ref{limitvelint}) implies that $h^2\pa_x p\in L^2(0,T;L^2(\omega))$ and therefore
$\pa_x p\in L^2(0,T;L^2(\omega))$.
Hence, going back to (\ref{lubrik2}) we find
\begin{equation*}
\int_0^T\!\!\int_\omega\left(\left(\frac{1}{12}h^2\pa_x p - h^2\Phi\right)\pa_x\varphi 
- h\pa_x h\left(\frac{1}{12} \pa_x p -  \Phi\right)\varphi \right)\dd x\dd t
+\int_0^T\!\!\int_\omega\frac{\partial_{t}h}{h}\varphi\,\dd x\dd t = 0\,.
\end{equation*}
Integrating by parts formally and taking $\tilde\varphi = \varphi/h$ as a new test function
we get
\begin{equation*}
-\int_0^T\!\!\int_\omega\left(h\pa_x\left(h^2\left(\frac{1}{12}\pa_x p - \Phi\right)\right)
+ \pa_x hh^2\left(\frac{1}{12} \pa_x p -  \Phi\right) \right)\tilde\varphi\,\dd x\dd t
+\int_0^T\!\!\int_\omega\partial_{t}h\,\tilde\varphi\,\dd x\dd t = 0\,,
\end{equation*}
which, noticing the product rule and neglecting tilda, can be written as
\begin{equation}\label{eq:Reynolds_int2}
-\int_0^T\!\!\int_\omega\pa_x\left(h^3\left(\frac{1}{12}\pa_x p - \Phi\right)\right)\varphi\,\dd x\dd t
+\int_0^T\!\!\int_\omega\partial_{t}h\,\varphi\,\dd x\dd t = 0\,
\end{equation}
for all $\varphi\in C^1_c([0,T);H^1_{\#}({\omega}))$.
This equation can be understood in the sense of equality
in space $L^2(0,T;H^{-1}(\Omega))$. Writing briefly
\begin{equation}\label{eq:Reynolds}
\pa_t h = \pa_x\left(h^3\left(\frac{1}{12}\pa_xp - \Phi \right) \right)\,,
\end{equation}
it can be interpreted as a Reynolds type equation for the pressure.
Integrating by parts in (\ref{eq:Reynolds_int2}) both in space and time we find its
weak formulation:
\begin{equation}\label{eq:Reynolds_int}
\int_0^T\!\!\int_\omega h^3\left(\frac{1}{12}\pa_x p - \Phi\right)\pa_x\varphi\,\dd x\dd t
-\int_0^T\!\!\int_\omega h\,\partial_{t}\varphi\,\dd x\dd t = 0\,,
\end{equation}
which holds for all $\varphi\in C^1_c([0,T);H^1_{\#}({\omega}))$. 
Notice that (\ref{eq:Reynolds_int}) is the second equation that relates $p$ and $h$, 
which together with (\ref{p_integral}) makes the system closed. Thus, the system 
(\ref{p_integral}), (\ref{eq:Reynolds_int}) can already 
be considered as a reduced model for the
FSI problem (\ref{1.eq:stokes})-(\ref{KinematicBC}), but we can make a step further.

\subsection{The sixth-order thin-film equation}
If we formally plug-in (\ref{PresDis}) into (\ref{eq:Reynolds}) we immediately obtain
the sixth-order thin-film equation
\begin{align}\label{ThinFilm}
\pa_t h = \pa_x\left(h^3\left(\frac{\upbeta}{12}\partial^5_{x}h 
- \chi_{\{r=3\}}\frac{\updelta}{12}\partial^3_{x}\partial_{t}h - \Phi \right) \right)\,,
\end{align}
which is to be understood in the sense of $L^2(0,T;H^{-1}_\#(\omega))$.
Recall from above that $\pa_x p\in L^2(0,T;L^2(\omega))$. Thus, 
(\ref{PresDis}) yields
$\upbeta\partial^5_{x}h -\chi_{\{r=3\}}\updelta\partial_{t}\partial^3_{x}h
\in L^2(0,T;L^2(\omega))$, and equation (\ref{ThinFilm}) (cf.~eq.~(\ref{eq:Reynolds_int}))
in its weak formulation becomes
\begin{equation}\label{eq:thinfilm_int}
\int_0^T\!\!\int_\omega h^3\left(\frac{\upbeta}{12}\partial^5_{x}h 
- \chi_{\{r=3\}}\frac{\updelta}{12}\partial^3_{x}\partial_{t}h - \Phi\right)\pa_x\varphi\,\dd x\dd t
-\int_0^T\!\!\int_\omega h\,\partial_{t}\varphi\,\dd x\dd t = 0\,,
\end{equation}
for all $\varphi\in C^1_c([0,T);H^1_{\#}({\omega}))$. In this way $h$ can be interpreted as a 
weak solution to the thin-film equation (\ref{ThinFilm}) with periodic boundary conditions
and initial data $h(0) = h_0$ (cf.~Theorem \ref{ExistenceGlobal}). 
However, 
following the pioneering work on higher-order evolution equations \cite{BeFr90},
much more can be proved. We summarize results for $h$ in the following:
\begin{proposition}
Function $h(x,t)$ is unique positive classical solution of equation (\ref{ThinFilm}).
Furthermore, $h$ is Lipschitz continuous in $x$ and H\"older continuous in $t$ 
with exponent $1/5$.
\end{proposition}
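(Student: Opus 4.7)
The plan is to split the proof into four parts: positivity, Lipschitz regularity in $x$, classical regularity, uniqueness, and the temporal H\"older estimate. The positivity of $h$ on $\overline\omega\times[0,T]$ is inherited directly from Proposition~\ref{prop:no_contact} combined with the uniform convergence~(\ref{strong_conv}). The Lipschitz regularity in $x$, uniformly in $t$, is a direct consequence of the bound $\|h\|_{L^\infty(0,T;H^2_\#(\omega))}\leq C$ derived from the energy estimate and the 1D Sobolev embedding $H^2\hookrightarrow W^{1,\infty}$.

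For the classical regularity, I would view (\ref{ThinFilm}) as a non-degenerate quasilinear sixth-order parabolic equation when $r<3$, or as a quasilinear pseudo-parabolic equation when $r=3$. In both cases the mobility $h^3$ is bounded above and away from zero, and the potential $\Phi$ inherits regularity from $\bs f$. A standard parabolic bootstrap applied to the linearization of (\ref{ThinFilm}) around $h$ (Schauder or $L^p$-theory for higher-order equations) iteratively improves the regularity of $h$ until every derivative entering the equation is continuous and $h$ satisfies (\ref{ThinFilm}) pointwise.

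Uniqueness within the class of positive classical solutions would follow from an energy estimate on the difference $w=h_1-h_2$ of two solutions with the same initial datum. Factoring $h_1^3-h_2^3=(h_1^2+h_1h_2+h_2^2)w$ and testing the resulting divergence-form equation for $w$ against a suitable functional of $w$ (either $w$ itself to generate $\|\partial_x^3 w\|_{L^2}^2$ via integration by parts, or its primitive in $x$), I would exploit the strict positivity of $h_1,h_2$ to keep the sixth-order term coercive, absorb the lower-order nonlinear contributions, and close with Gr\"onwall's inequality.

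The H\"older exponent $1/5$ in time is obtained by a mollification/duality argument from the weak form (\ref{eq:thinfilm_int}). Fix $x_0\in\omega$ and $0\leq t_1<t_2\leq T$, and pick a smooth bump $\varphi_\delta$ supported in $(x_0-\delta,x_0+\delta)$ with $\int_\omega\varphi_\delta\,\dd x=1$. The Lipschitz bound gives $\bigl|h(x_0,t_i)-\int_\omega h(\cdot,t_i)\varphi_\delta\,\dd x\bigr|\leq C\delta$. Using the test function $\varphi_\delta(x)\zeta_\varepsilon(t)$ with $\zeta_\varepsilon$ a smooth approximation of $\mathbb{1}_{[t_1,t_2]}$ in (\ref{eq:thinfilm_int}) and letting $\varepsilon\downarrow 0$ yields
\begin{equation*}
\left|\int_\omega (h(t_2)-h(t_1))\varphi_\delta\,\dd x\right|
\leq C\sqrt{t_2-t_1}\,\|\partial_x\varphi_\delta\|_{L^2(\omega)}
+C(t_2-t_1)\|\partial_x\varphi_\delta\|_{L^1(\omega)},
\end{equation*}
where the first term exploits $h^3\bigl(\tfrac{\upbeta}{12}\partial_x^5 h-\chi_{\{r=3\}}\tfrac{\updelta}{12}\partial_x^3\partial_t h\bigr)\in L^2_{t,x}$ (since $\partial_xp\in L^2_{t,x}$ by (\ref{PresDis})) and the second uses $h^3\Phi\in L^\infty_{t,x}$. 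With the scalings $\|\partial_x\varphi_\delta\|_{L^2}\sim\delta^{-3/2}$ and $\|\partial_x\varphi_\delta\|_{L^1}\sim\delta^{-1}$ one obtains $|h(x_0,t_2)-h(x_0,t_1)|\leq C\bigl(\delta+\sqrt{t_2-t_1}\,\delta^{-3/2}+(t_2-t_1)\delta^{-1}\bigr)$, and the choice $\delta\sim(t_2-t_1)^{1/5}$ balances the first two summands (the third is then of higher order) and gives the optimal exponent $1/5$. I expect the classical-regularity step to be the main obstacle: the sixth-order quasilinear character and, in particular, the presence of $\partial_x^3\partial_t h$ when $r=3$ make the bootstrap non-routine, and one must be careful with the compatibility of $\eta_0$ with the higher-order boundary/initial structure to push regularity up to $t=0$.
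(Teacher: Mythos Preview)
Your proposal is correct and follows essentially the same route as the paper: positivity from Proposition~\ref{prop:no_contact}, Lipschitz-in-$x$ from the $L^\infty_t H^2_x$ bound, classical regularity by invoking standard higher-order parabolic theory once the mobility is bounded away from zero, and uniqueness via an energy argument on the difference. Your explicit mollification/duality computation for the temporal H\"older exponent $1/5$ is precisely the ``straightforward adjustment'' of \cite[Lemma~2.1]{BeFr90} that the paper cites without spelling out, and your flagged concern about the $r=3$ pseudo-parabolic bootstrap is legitimate---the paper simply appeals to \cite{Eid69} at that point without further comment.
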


\begin{proof}
Strict positivity of $h$, i.e.~$h(x,t) \geq c >0$, has been already observed and it 
follows from Proposition \ref{prop:no_contact}. The Lipschitz continuity
follows from the fact that $h(\cdot,t)\in C^{0}([0,T];C^1_\#(\overline\omega))$
for all $t\in [0,T]$, while the H\"older continuity in $t$ with exponent $1/5$ can be proved
by the straightforward adjustment of the proof of \cite[Lemma 2.1]{BeFr90} for 
the sixth-order equation (\ref{eq:thinfilm_int}). 
Since $h$ is strictly positive and continuous, the standard parabolic regularity theory \cite{Eid69} 
applies and we conclude that all derivatives $\pa_th$, $\pa_xh$, \ldots, 
$\pa_x^6$, $\pa_x^3\pa_t h$, $\pa_x^4\pa_t h$ are continuous, 
which means that $h$ is a classical solution.
Finally, uniqueness follows by utilizing the strict positivity of $h$ and 
adopting the proof of \cite[Theorem 4.1 (iii)]{BeFr90} to our equation.
\end{proof}

In this way the sixth-order thin-film equation (\ref{ThinFilm}) can be seen as the reduced 
model of the FSI problem (\ref{1.eq:stokes})-(\ref{KinematicBC}) in the lubrication approximation regime,
i.e.~in the regime of the vanishing relative fluid thichness.
This finishes the proof of Theorem \ref{tm:main}.

\begin{remark}
An intimate relation between the FSI system (\ref{1.eq:stokes})-(\ref{KinematicBC}) and 
the thin-film equation (\ref{ThinFilm}) reveals in the following. Taking the time derivative of 
$\int_\omega h(t)^{-1} \dd x$ along solutions to (\ref{ThinFilm}), integrating by parts and 
employing the Poincar\'e inequality we find:
\begin{align*}
\frac{\dd}{\dd t}\int_\omega\frac{1}{h(t)}\dd x 
&= -\int_\omega \frac{1}{h^2}\pa_x\left(h^3\left(\frac{\upbeta}{12}\partial^5_{x}h 
- \chi_{\{r=3\}}\frac{\updelta}{12}\partial^3_{x}\partial_{t}h - \Phi \right) \right)\dd x\\
&= -\frac{\upbeta}{6}\int_\omega (\pa_x^3h)^2\dd x 
- \chi_{\{r=3\}}\frac{\updelta}{12} \frac{\dd}{\dd t}\int_\omega (\pa_x^2h)^2\dd x
+ 2\int_\omega\Phi \pa_xh\,\dd x\\
&\leq -\frac{\upbeta}{12}\int_\omega (\pa_x^3h)^2\dd x 
- \chi_{\{r=3\}}\frac{\updelta}{12} \frac{\dd}{\dd t}\int_\omega (\pa_x^2h)^2\dd x + C\,,
\end{align*}
which integrating from $0$ to $t$ ($t < T$) yields
\begin{equation*}
\chi_{\{r=3\}}\frac{\updelta}{2} \int_\omega (\pa_x^2h)^2\dd x + 
\int_\omega\frac{6}{h(t)}\dd x 
+ \frac{\upbeta}{2}\int_0^t\!\!\int_\omega (\pa_x^3h)^2\dd x\dd s \leq C\,.
\end{equation*}
Now observe that this estimate is equivalent to the uniform estimate of (\ref{Distance1})
after employing the lower-semicontinuity and passing to the limit in (\ref{Distance1}) as
$\eps\downarrow0$.
\end{remark}

\section{Conclusion and perspectives} \label{sec:CP}

Starting from the FSI problem(\ref{1.eq:stokes})-(\ref{KinematicBC}) under 
certain scaling assumptions, we have rigorously derived the sixth-order thin-film equation 
on the limit of the vanishing relative fluid thickness $\eps$. 
The procedure essentially relies on the quantitative energy estimate and the uniform no-contact result. 

The viscoelastic term $\updelta \partial^2_{x}\pa_t\eta$ in the structure equation of the FSI problem is present mainly as a regularizing term, which guarantees the global well-posedness of strong solutions (cf.~\cite{grandmont2016existence}). 
Apparently, it does not play the same role for obtaining the uniform estimates, and if $\updelta$ 
is not large enough, i.e.~is smaller than $O(\eps^{-3})$, this term vanishes in the limit. 
On the other hand, when the viscoelasticity parameter $\updelta$ is of the order 
$O(\eps^{-3})$ and thus survives in the limit as $\eps\downarrow0$, 
the resulting reduced model which includes higher-order 
spatio-temporal derivatives seems to be new in the literature.
So it would be interesting to identify a real physical FSI system in such viscoelasticity regime 
and investigate validity and significance of the novel reduced model.

The main point of reduced models is to provide approximate solutions of the original 
problem. So let $h$ be the classical solution of the thin-film equation (\ref{ThinFilm})
with initial datum $h_0 > 0$ and let $\eps > 0$. Then approximate solutions $(\bva^\eps,\paa^\eps,\upeta^\eps)$ to the
FSI problem (\ref{1.eq:stokes})-(\ref{KinematicBC}) on the reference domain $\Omega\times(0,T)$
with initial displacement $\eta^\eps_0 = \eps h_0$
can be reconstructed according to (\ref{PresDis}) and
(\ref{LimitVelFormula}) as:
\begin{align*}
\upeta^\eps &= \eps h\,,\\
\paa^\eps & = \upbeta\partial^4_{x}h -\chi_{\{r=3\}}\updelta\partial_{t}\partial^2_{x}h\,,\\
\va^\eps_1 & = \frac{\eps^2}{2}y(y - 1)h^2\partial_x\paa^\eps + \eps^2h^2F\,,\quad \va^\eps_2 = 0\,,
\end{align*}
with $F$ given as in (\ref{LimitVelFormula}).
Theorem \ref{tm:main} provides only strong
and weak convergence results in corresponding spaces, but from the application point 
of view it is important to have a quantitative error estimate. 
In linear models \cite{BuMu21P,BuMu21R} we were able to obtain error estimates 
for approximate solutions in strong norms, and it is our next aim to extend these 
results to the nonlinear framework of this paper. This will be the subject of our future work.

\appendix

\section{~}

\subsection{Non-dimensionalization and scaling assumptions}\label{app:SA}

Our starting point are dimensional Navier-Stokes (NS) equations describing the flow of a 
viscous fluid in a twodimensional channel with deformable top boundary:
\begin{align*}
\varrho_f\left(\pa_t\bv+(\bv\cdot \nabla)\bv\right) - \diver\sigma_f(\bv,p) &= \bs f\,,\quad 
\Omega_\eta(t)\times(0,\infty)\,,
\\
\diver \bv &= 0\,,\quad \Omega_\eta(t)\times(0,\infty)\,,
\end{align*} where $\bv$ is the fluid velocity, $p$ is pressure, $\rho_f$ is the fluid density, $\bs f$
is an external force and $\sigma_f(\bv,p) = 2\mu \D(\bv) - pI_2$ is the Cauchy stress tensor with 
$\mu$ denoting the fluid viscosity. Geometry of the channel, $\Omega_\eta(t)$ 
is assumed to be the subgraph of an unknown function $\eta$, 
which describes the displacement of the channel's top wall (see Figure \ref{fig:domain_eta}).
The dynamics of $\eta$ is assumed to be governed by a viscoelastic beam type model \cite{Rus92}
\begin{align*}
\varrho_sb\pa_{tt}\eta - D \partial^2_{x}\pa_t\eta + B\partial^4_{x}\eta  
&=-J^{\eta}(x,t) \big(\sigma_f(\bv,p){\bf n}^{\eta}\big )(x,\eta(x,t),t)\cdot{\bf e}_z\,, 
\quad \omega\times(0,\infty)\,.
\end{align*}
Here $\varrho_s$ denotes the structure density, $b$ is thickness of the top wall, 
$D$ is the viscosity coefficient and $B$ is the bending term given by
$B = Eb^3/(12(1 - \nu^2))$, where $E$ is the Young's modulus and $\nu$ the Poisson ratio
of the viscoelastic material. Coupling of the two subsystems (fluid and structure) 
is further strengthened by the continuity condition for normal velocities across the
top boundary: 
\begin{equation*}
\bv(x,\eta(x,t),t) =(0,\partial_t\eta(x,t))\,,\quad \omega\times(0,\infty)\,.
\end{equation*}

We non-dimensionalize the above equations in a standard way. Geometry of the channel is 
non-dimensionalized by the channel length $L$ and other non-dimensional quantities, denoted by hats, 
are introduced as follows:
\begin{align*}
\hat{\bx} = \frac{1}{L}\bx\,,\quad \hat t = \frac{t}{T}\,, 
\quad \hat \bv = \frac{1}{V}\bv\,,\quad \hat{p} = \frac{p}{P}\,,\quad \hat\bef = \frac{1}{F}\bef\,,\quad
\hat\eta = \frac{\eta}{L}\,.
\end{align*}
Setting the time, pressure and force scales as: 
\begin{equation*}
T = \frac{L}{V}\,,\quad P = \frac{\mu V}{L}\,,\quad F = \frac{P}{L}
\end{equation*}
leads to the non-dimensionalized NS equations:
\begin{align}
\Rey\left(\pa_{\hat t}\hat\bv+(\hat\bv\cdot \hat\nabla)\hat\bv\right) - \hat \diver\,\hat\sigma_f(\hat\bv,\hat p) &= \hat\bef\,,\quad 
\Omega_{\hat\eta}(\hat t)\times(0,\infty)\,,\label{A.eq:stokes}\\
\hat \diver\, \hat\bv &= 0\,,\quad \Omega_{\hat\eta}(\hat t)\times(0,\infty)\,,\label{A.eq:divfree}
\end{align}
where $\Rey = \varrho_fVL/\mu$ is the Reynolds number and 
$\hat \sigma_f(\hat\bv,\hat p) = 2\hat\D(\hat\bv) - \hat pI_2$. Assuming that
$\Rey \sim O(1)$ in our system, equations (\ref{A.eq:stokes})--(\ref{A.eq:divfree}) 
correspond to initial equations (\ref{1.eq:stokes})--(\ref{1.eq:divfree}).
\begin{remark}
A customary approach in lubrication theory would be to take different domain scales, 
which would then lead to another rescaled version of the NS system. On the contrary, we avoid 
this a priori scale separation in the fluid domain and follow an approach that is 
ansatz-free and based on carefull examination of energy estimates.
\end{remark}

Similarly, the structure equation turns into 
\begin{equation}
\frac{\varrho_sbV}{\mu}\pa_{{\hat t}{\hat t}}\hat \eta - \frac{D}{\mu L} \partial^2_{\hat x}\pa_{\hat t}\hat \eta 
+ \frac{B}{\mu V L^2}\partial^4_{\hat x}\hat \eta  
=-J^{\hat\eta} \big(\hat \sigma_f(\hat\bv,\hat p){\bf n}^{\hat \eta}\big )\cdot{\bf e}_{\hat z}\,, 
\quad \hat\omega\times(0,\infty)\,,\label{A.eq:elast}
\end{equation}
where $\hat \omega = (0,1)$. On the left-hand side we identify dimensionless numbers,
which we denote by:
\begin{align}
\uvro = \frac{\varrho_sbV}{\mu}\,,\quad \updelta = \frac{D}{\mu L}\,,\quad \upbeta = \frac{B}{\mu V L^2}\,.
\end{align}
For these numbers we assume the following orders of magnitude in terms of a small parameter
$0 < \eps \ll 1$:
\begin{align}\label{A.S1}
\uvro\sim O(\eps)\,,\quad \updelta\sim O(\eps^{-r})\,,\, \, r\in[1,3] \,, \quad \upbeta\sim O(\eps^{-1})\,.
\end{align} 
These are precisely the scaling assumption (S1). 

Let us now advocate (\ref{A.S1}) from a physical point of view. 
We can interpret $\eps$ as the relative channel thickness, i.e.~$\eps = H/L$, 
where $H$ is the nominal value of the channel height. 
If we had rescaled the vertical variable $z$ as $\hat z = z/H$, 
we would obtain the bending term
\begin{equation*}
\tilde\upbeta=\frac{BH}{\mu V L^3} = \upbeta\eps\,.
\end{equation*}
Balancing the fluid pressure, which is $O(1)$ in (\ref{A.eq:elast}), 
with the bending of the structure leads to the requirement
on $\tilde\upbeta$ to be $O(1)$, which in further implies $\upbeta\sim O(\eps^{-1})$.
Moreover, this scaling assumption defines the physical length scale in FSI systems,
namely, $L = \left(B\eps/(V\mu)\right)^{1/2}$. A microfluidic device with structure made out of a 
polymer called polydimethylsiloxane (PDMS), whose characteristic values fit into our theoretical
framework has been designed and experimentaly analyzed in \cite[Experiment S4]{OYN13}.

Next, we discuss the viscoelastic term, which originates from the Kelvin-Voigt model of
viscoelasticity, as it has been argued in \cite{Rus92}. Dimensional analysis reveals that 
$D \sim \mu_sb$, where $\mu_s$ denotes the structure viscosity. Hence,
\begin{equation*}
\updelta \sim \frac{\mu_sb}{\mu L}\,.
\end{equation*}
Since the ratio $\mu_s/\mu$ is very large, typically of order $10^{11}$ for 
channels made of viscoelastic polymers, and on the other hand $b\ll L$, this makes
the assumption $\updelta\sim O(\eps^{-r})$, for some $r\in[1,3]$, plausible.
For simplicity we take one parameter $r$ which takes into account both mechanical 
(viscosity ratio) and geometrical aspects of the microchannel.
Finally, we consider the inertial term. We can write it down as
\begin{equation*}
\uvro = \frac{\varrho_s}{\varrho_f}\frac{b}{L}\Rey\,,
\end{equation*}
from which the assumption $\uvro\sim O(\eps)$ reads as $\varrho_s b\Rey/(\varrho_fL) \sim O(\eps)$.

\subsection{Proof of Lemma \ref{StreamEstimates}}\label{app:stream_est}

Utilizing the Sobolev embedding $L^\infty(\omega)\hookrightarrow H^1(\omega)$ and 
estimate (\ref{DisplacementEstimate}), for every $t\in(0,T)$ we find
\begin{align*}
\|\pa_x\eta^\eps(t)\|_{L^\infty(\omega)} \leq C \|\pa_x\eta^\eps(t)\|_{H^1(\omega)} \leq C\eps\,,
\end{align*}
where $C>0$ is independent of $\eps$.
According to \cite[Proposition 8]{grandmont2016existence}, there exists $C>0$ 
(independent of $\eps$) such that for all $t\in(0,T)$ and all $(x,z)\in\Omega_\eta(t)$
\begin{align*}
|\nabla\psi^\eps(x,z,t)| &\leq C\left(|\pa_x^2\eta^\eps(x,t)| 
+ \frac{|\pa_x\eta^\eps(x,t)|}{\eta^\eps(x,t)} + \frac{|\pa_x\eta^\eps(x,t)|^2}{\eta^\eps(x,t)} 
\right)\\
&\leq C\left(|\pa_x^2\eta^\eps(x,t)| 
+ \frac{\eps}{\eta^\eps(x,t)}\right)\,,
\end{align*}
which proves (\ref{nabla:stream}). From \cite[Proposition 8]{grandmont2016existence},
for all $t\in(0,T)$ we have
\begin{align*}
\|\pa_x\psi^\eps(t)\|_{L^2(\Omega_{\eta^\eps}(t))} &\leq C\|\eta^\eps(t)\|_{L^\infty(\omega)}^{1/2}
\|\pa_x^2\eta^\eps(t)\|_{L^2(\omega)}\,,\\
\|\pa_y\psi^\eps(t)\|_{L^2(\Omega_{\eta^\eps}(t))} &\leq C\|\eta^\eps(t)\|_{L^\infty(\omega)}^{1/4}
\|\pa_x^2\eta^\eps(t)\|_{L^2(\omega)}^{1/2}\left\|\frac{1}{\eta^\eps(t)}\right\|_{L^1(\omega)}^{1/4}\,.
\end{align*}
Now estimates (\ref{EIEkey}) and (\ref{DisplacementEstimate}) immediately provide
(\ref{dx:stream}) and (\ref{dy:stream}).
Combining estimate (\ref{DisplacementEstimate}) and continuity of the Sobolev 
embedding $L^\infty(\omega)\hookrightarrow H^1(\omega)$ gives
$\|\eta^\eps\|_{L^\infty(0,T;L^\infty(\omega))} \leq C\eps$, while energy estimate
(\ref{EIEkey}) yields
\begin{equation*}
\int_0^T\|\pa_{xt}\eta^\eps(s)\|_{L^2(\omega)}^2\dd s \leq C\eps^{r-1}\quad\text{and}\quad
\|\pa_t \eta^\eps\|_{L^\infty(0,T;L^2(\omega))}^2 \leq C\eps^{-4}\,.
\end{equation*}
Employing these estimates in \cite[Proposition 8, eq.~(127)]{grandmont2016existence} we find
\begin{align*}
\|\pa_t\psi^\eps\|_{L^2(\Omega_{\eta^{\eps}}(t)\times(0,T))}
&\leq C \bigg(\int_0^T\big(\|\eta^\eps(s)\|_{L^\infty(\omega)}\|\pa_{xt}\eta^\eps(s)\|_{L^2(\omega)}^2 \\ 
&\quad\qquad + \|\pa_t \eta^\eps(s)\|_{L^2(\omega)}^2\|\pa_x^3\eta^\eps(s)\|_{L^2(\omega)} \big)\dd s \bigg)^{1/2}\\
&\leq C \left( \eps^{r} + \eps^{-4}\int_0^T\|\pa_x^3\eta^\eps(s)\|_{L^2(\omega)}\dd s\right)^{1/2}\,,
\end{align*}
which is (\ref{dt:stream}). Finally, \cite[Proposition 8, eq.~(128)]{grandmont2016existence},
after the time rescaling, gives
\begin{align*}
\|\pa_x^2\psi^\eps\|_{L^2(\Omega_{\eta^{\eps}}(t)\times(0,T))}
&\leq C  \bigg(\int_0^T\big(\|\eta^\eps(s)\|_{L^\infty(\omega)}\|\pa_{x}^3\eta^\eps(s)\|_{L^2(\omega)}^2\\ 
&\quad\qquad + \|\pa_x^2 \eta^\eps(s)\|_{L^2(\omega)}^{3/2}\|\pa_x^3\eta^\eps(s)\|_{L^2(\omega)}^{3/2} \big)\dd s \bigg)^{1/2}\\
&\leq C\left (\int_0^T \left(\eps\|\partial^3_x\eta^{\eps}(s)\|^2_{L^2(\omega)} 
+ \eps^{3/2}\|\partial^3_x\eta^{\eps}(s)\|^{3/2}_{L^2(\omega)}\right) \dd s   \right )^{1/2},
\end{align*}
which finishes the proof.

\bibliographystyle{plain}

\end{document}